\documentclass[12pt]{article}
\usepackage{amsmath, amsfonts, amsthm, amssymb}
\usepackage{graphicx}
\usepackage{verbatim}% for comment

\hoffset=-1.5cm\voffset=-2.5cm
\setlength{\textwidth}{16cm}
\setlength{\textheight}{24cm}

\setcounter{secnumdepth}{1}
\numberwithin{equation}{section}

\newtheorem{thm}{Theorem}[section]
\newtheorem{lma}[thm]{Lemma}
\newtheorem{cor}[thm]{Corollary}
\newtheorem{defn}[thm]{Definition}

\newtheorem{conj}[thm]{Conjecture}

\renewcommand{\geq}{\geqslant}
\renewcommand{\leq}{\leqslant}

\title{The visible part of plane self-similar sets}
\author{{Kenneth J. Falconer and Jonathan M. Fraser}\\
\small{{\it Mathematical Institute, 
University of St~Andrews, North Haugh, St~Andrews,}} \\
\small{{\it Fife, KY16~9SS, Scotland }}}

\date{}
\begin{document}
\maketitle

\begin{abstract}
Given  a compact subset $F$ of $\mathbb{R}^2$, the visible part $V_\theta F$ of $F$ from direction $\theta$ is the set of $x$ in $F$ such that the half-line from $x$ in direction $\theta$ intersects $F$ only at $x$. It is suggested that if $\dim_H F \geq 1$ then $\dim_H V_\theta F = 1$  for almost all $\theta$, where $\dim_H$ denotes Hausdorff dimension. We confirm this when $F$ is a self-similar set satisfying the convex open set condition and such that the orthogonal projection of $F$ onto every line is an interval. In particular the underlying similarities may involve arbitrary rotations and $F$ need not be connected.

\end{abstract}

\section{Introduction}

The concept of `visibility' has been studied for many years. Nikodym \cite{nik} constructed a remarkable  `linearly accessible set', that is a plane Lebesgue measurable subset $F$ of the unit square of full measure 1 such that for each $x \in F$ there is a straight line $L$ with $F \cap L = \{x\}$, in other words each point of $F$ is visible from two (diametrically opposite) directions.  In the realm of convex geometry,  Krasnosel'skii's theorem provides an elegant criterion for the entire boundary of a compact subset of  $\mathbb{R}^2$ to be visible by direct line of sight from an interior point, see \cite{DBK} for a survey of this area.  More recently, the nature of the visible parts of fractals has been considered, with the question of the Hausdorff dimension of the visible parts attracting particular interest \cite{perc, vis_dim, toby, simon}. 

There are two (related) approaches: given a set $F \subset \mathbb{R}^2$ one can consider the subset of $F$ that is visible from a point $x$ (that is the subset of $F$ that may be joined to $x$ by  a line segment intersecting $F$ at no other points).
Alternatively, the visible part may be considered with respect to a direction $\theta$.  In this case the visible set of $F$ is the subset of  $F$ from which the half-line in direction $\theta$ does not intersect $F$ in any other points. We adopt the latter approach here.

The relationship between the Hausdorff dimension of $F$ and its visible subsets from various points or directions is of particular interest. It has been suggested that if $F$ has Hausdorff dimension greater than 1 then the Hausdorff dimension of the visible subset is 1 from almost all points and from almost all directions. This has been established for certain classes of set, and here we address this question for a class of self-similar subsets of the plane.

For a unit vector $\theta \in \mathbb{R}^2$ we define $l_\theta$ to be the half-line from the origin in direction $\theta$
\[
l_\theta = \{r  \theta: r \geq 0 \}.
\]
Let  $\Pi_\theta = \{x: x\cdot \theta = 0\} $ be the linear subspace perpendicular to $\theta$ and
$\mathrm{proj}_\theta:  \mathbb{R}^2  \to \Pi_\theta $ be orthogonal projection onto $\Pi_\theta$.

\begin{defn}
For a compact $F \subset \mathbb{R}^2$  the \emph{visible part} of $F$ from direction  $\theta$ is
\[
V_\theta F = \big\{ x \in F : (x+l_\theta) \cap F = \{x\}  \big\}.
\]
\end{defn}
\noindent There are natural generalisations of visibility to higher dimensions but we do not discuss these here, see \cite{vis_dim}.  

The Hausdorff dimensions of  projections of sets are summarised in Marstand's projection theorem, see \cite{falconer, marstrand}. We write  $\dim_H$ and  $\dim_B$ to denote Hausdorff and box-counting dimension.

\begin{thm}
Let $F \subset \mathbb{R}^2$ be a Borel set.

(a) If $\dim_H F \leq 1$ then $\dim_H \mathrm{proj}_\theta F = \dim_H F$ for Lebesgue almost all $\theta$,

(b) If $\dim_H F > 1$ then $\dim_H \mathrm{proj}_\theta F = 1$ for Lebesgue almost all $\theta$.
\end{thm}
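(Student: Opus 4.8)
The plan is to prove both statements by the potential-theoretic (energy) method, reducing everything to the Frostman characterisation of Hausdorff dimension: for a compact set $E$,
\[
\dim_H E = \sup\Big\{ s \geq 0 : \text{there is a Borel measure } \mu \text{ with } 0<\mu(E)<\infty \text{ and } I_s(\mu)<\infty \Big\},
\]
where $I_s(\mu) = \iint |x-y|^{-s}\, d\mu(x)\, d\mu(y)$ is the $s$-energy. Since each $\mathrm{proj}_\theta$ is a $1$-Lipschitz map into a line, $\dim_H \mathrm{proj}_\theta F \leq \min\{\dim_H F, 1\}$ for every $\theta$, so both parts reduce to proving the matching lower bounds for almost all $\theta$. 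I would parametrise directions by the angle $\phi\in[0,\pi)$, writing $\theta_\phi=(\cos\phi,\sin\phi)$ and identifying $\mathrm{proj}_{\theta_\phi}$ with the real-valued map $p_\phi(x)=x\cdot\theta_\phi$; the pushforward $\mu_\phi=(p_\phi)_\ast\mu$ is then the candidate measure on the projection.

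For part (a), fix $s<\dim_H F\leq 1$ and choose by Frostman a measure $\mu$ on $F$ with $I_s(\mu)<\infty$. The key step is to average the projected energies over all directions and apply Fubini:
\[
\int_0^\pi I_s(\mu_\phi)\, d\phi = \iint \left(\int_0^\pi \frac{d\phi}{|(x-y)\cdot\theta_\phi|^s}\right) d\mu(x)\, d\mu(y).
\]
Writing $x-y$ in polar form shows the inner integral equals $c_s\,|x-y|^{-s}$ with $c_s=\int_0^\pi|\cos\phi|^{-s}\, d\phi$, and this constant is \emph{finite precisely because $s<1$}. Hence the left-hand side equals $c_s I_s(\mu)<\infty$, so $I_s(\mu_\phi)<\infty$ for almost every $\phi$, giving $\dim_H \mathrm{proj}_\theta F\geq s$ for almost all $\theta$. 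Taking $s$ through a sequence increasing to $\dim_H F$ (so that the exceptional null sets accumulate over only countably many $s$) yields the required lower bound.

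The main obstacle is part (b), where one needs dimension $1$, i.e. $s=1$, but the angular constant $c_s$ diverges as $s\to1$, so the energy average above is useless at the threshold. I would instead prove the stronger statement that for almost every $\phi$ the measure $\mu_\phi$ is absolutely continuous with an $L^2$ density, which forces $\mathrm{proj}_\theta F$ to have positive length and hence dimension $1$. Since $\dim_H F>1$ I can pick $s$ with $1<s<\min\{\dim_H F,2\}$ and a measure $\mu$ on $F$ with $I_s(\mu)<\infty$; via the standard identity $I_s(\mu)\asymp\int_{\mathbb{R}^2}|\widehat\mu(\xi)|^2|\xi|^{s-2}\, d\xi$ this gives control of $\widehat\mu$. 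Because $\widehat{\mu_\phi}(t)=\widehat\mu(t\theta_\phi)$, passing to polar coordinates $\xi=t\theta_\phi$ (with Jacobian $|t|$) gives
\[
\int_0^\pi \|\widehat{\mu_\phi}\|_{L^2(\mathbb{R})}^2\, d\phi = \int_{\mathbb{R}^2}|\widehat\mu(\xi)|^2\,|\xi|^{-1}\, d\xi,
\]
and I would bound the right-hand side by splitting at $|\xi|=1$: on $|\xi|\geq1$ we have $|\xi|^{-1}\leq|\xi|^{s-2}$ since $s>1$, controlled by $I_s(\mu)$, while on $|\xi|\leq1$ the boundedness $|\widehat\mu(\xi)|\leq\mu(\mathbb{R}^2)$ together with the local integrability of $|\xi|^{-1}$ in the plane makes the integral finite. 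Finiteness of the angular average forces $\widehat{\mu_\phi}\in L^2(\mathbb{R})$ for almost every $\phi$, so by Plancherel $\mu_\phi$ has an $L^2$ density; as $\mu_\phi(\mathbb{R})=\mu(\mathbb{R}^2)>0$ the projection has positive Lebesgue measure, completing the lower bound. The delicate points throughout are justifying the Fubini and polar interchanges (handled by non-negativity of the integrands together with Tonelli) and the passage across the critical exponent $s=1$, which is exactly why the two parts require different arguments.
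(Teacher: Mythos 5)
This statement is Marstrand's projection theorem, which the paper quotes purely as background and does not prove --- it refers the reader to the cited works of Marstrand and of Falconer --- so there is no in-paper argument to compare against. Your proposal is a correct and complete proof, and it is the standard modern one (Kaufman's potential-theoretic argument, which is essentially the proof given in the cited Falconer book, rather than Marstrand's original geometric argument): part (a) by averaging the $s$-energies of the projected measures over directions, where the angular constant $c_s=\int_0^\pi|\cos\phi|^{-s}\,d\phi$ is finite precisely because $s<1$; and part (b) by the Fourier-analytic upgrade, using $I_s(\mu)\asymp\int|\widehat{\mu}(\xi)|^2|\xi|^{s-2}\,d\xi$ with $1<s<2$, polar coordinates, and the split at $|\xi|=1$ to conclude that $\widehat{\mu_\phi}\in L^2$ for almost every $\phi$, whence the projection has positive length. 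Both delicate interchanges (Tonelli for the energy average, and the polar identity $\int_0^\pi\|\widehat{\mu_\phi}\|_2^2\,d\phi=\int|\widehat{\mu}(\xi)|^2|\xi|^{-1}\,d\xi$) are justified correctly, and the recognition that the threshold $s=1$ forces two different arguments is exactly the right structural point. One small technical remark: you state the energy characterisation of dimension for \emph{compact} sets but apply it to a Borel set $F$; the standard repair is to pass first to a compact subset $E\subseteq F$ with $\mathcal{H}^s(E)>0$ (inner regularity of Hausdorff measure on Borel sets) and then apply Frostman's lemma on $E$. With that one sentence added, the proof is complete.
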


Since $\dim_H \mathrm{proj}_\theta F \leq  \dim_H  V_\theta F  \leq   \dim_H F$ for all $\theta$,   part (a)  implies that if  $\dim_H F \leq 1$ then $ \dim_H  V_\theta F = \dim_H F$ for Lebesgue almost all $\theta$. If $\dim_H F > 1$ then (b) implies that $ \dim_H  V_\theta F \geq 1 $ for almost all $\theta$ and in many cases there is equality here for almost all $\theta$. Thus one is tempted to make the following conjecture.

\begin{conj} \label{vis conj}
If $F \subset \mathbb{R}^2$ and $\dim_H F \geq1$ then
$\dim_H V_\theta F =1$
for Lebesgue almost all $\theta$.
\end{conj}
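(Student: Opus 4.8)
The plan is to pinch $\dim_H V_\theta F$ between $1$ and $1$ for almost every $\theta$. The lower bound is free: the chain $\dim_H \mathrm{proj}_\theta F \leq \dim_H V_\theta F \leq \dim_H F$ recorded above, combined with the projection theorem (part (a) when $\dim_H F = 1$, part (b) when $\dim_H F > 1$), gives $\dim_H V_\theta F \geq 1$ for Lebesgue almost all $\theta$. When $\dim_H F = 1$ the reverse inequality $\dim_H V_\theta F \leq \dim_H F = 1$ holds automatically, so that case is already settled. Thus the whole difficulty is to prove, under $\dim_H F > 1$, the upper bound $\dim_H V_\theta F \leq 1$ for almost every $\theta$.

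The structural observation I would start from is that $V_\theta F$ meets each line parallel to $\theta$ in at most one point: on a line $\ell$ in direction $\theta$ the set $F \cap \ell$ is compact, and $x \in \ell$ lies in $V_\theta F$ exactly when $x$ is the forward-most point of $F \cap \ell$ in direction $\theta$. Hence $\mathrm{proj}_\theta$ restricts to a bijection of $V_\theta F$ onto $\mathrm{proj}_\theta F$, and $V_\theta F$ is the graph over the one-dimensional set $\mathrm{proj}_\theta F$ of the upper semicontinuous first-hitting function $g_\theta(p) = \max\{r : p + r\theta \in F\}$. I would emphasise that this does \emph{not} by itself yield the conclusion: graphs of even continuous functions over an interval can have Hausdorff dimension anywhere in $[1,2]$, and for exceptional directions the visible part can be genuinely two-dimensional (take $F$ the graph of a Weierstrass function and $\theta$ perpendicular to its domain; then every point is forward-most and $V_\theta F = F$). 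The averaging over $\theta$ is therefore indispensable.

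For the upper bound I would run a multiscale covering argument and integrate it against $\theta$. At scale $\delta = 2^{-n}$ cover $F$ by the mesh cubes $Q$ of side $\delta$ that it meets, and for each direction let $N_\delta(\theta)$ count those cubes meeting $V_\theta F$. A cube $Q$ can contain a visible point in direction $\theta$ only if it is \emph{unshadowed}, meaning the strip of width $\mathrm{diam}\,Q$ directly ahead of $Q$ in direction $\theta$ is not completely filled by cubes of the cover. The heart of the scheme is an occlusion estimate bounding, for each $Q$, the measure of directions in which $Q$ stays unshadowed by the mass of $F$ lying ahead of it, summed over all cubes to give $\int_0^\pi N_\delta(\theta)\,d\theta \lesssim \delta^{-1}(\log \delta^{-1})^{c}$. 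Fixing $s > 1$, this forces $\int_0^\pi \delta^{s} N_\delta(\theta)\,d\theta$ to be summable over $n$, so by Borel--Cantelli $\delta^{s} N_\delta(\theta) \to 0$ for almost every $\theta$; since $\delta^{s} N_\delta(\theta)$ bounds the $s$-dimensional Hausdorff content of $V_\theta F$ through this cover, we get $\mathcal{H}^s(V_\theta F) = 0$, and intersecting over rational $s \downarrow 1$ yields $\dim_H V_\theta F \leq 1$ for almost every $\theta$. I work with Hausdorff content rather than box dimension precisely because $V_\theta F$ is genuinely fractal.

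The decisive and hardest step is the occlusion estimate. Blocking a cube $Q$ in a given direction typically requires not one but several cubes ahead whose projections onto $\Pi_\theta$ jointly cover that of $Q$, and the near-tangential directions that slip between them are exactly those in which $g_\theta$ can remain rough; quantifying how the fractal mass ahead of $Q$ occludes it uniformly across scales, with constants surviving summation over the $\approx \delta^{-\dim_B F}$ cubes, is where a transversality or energy input must enter, and this is the gap that keeps the conjecture open in general. It is also why the authors retreat to self-similar $F$, where exact scaling symmetry and the interval-projection hypothesis make the occlusion bookkeeping tractable. A route I would keep in reserve is potential-theoretic, namely to show that for almost every $\theta$ the first-hitting function $g_\theta$ has finite graph energy at exponent slightly above $1$ (or is of bounded variation), since then its graph has dimension $1$; but establishing such regularity for almost all $\theta$ meets the same transversality obstruction.
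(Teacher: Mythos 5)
You have not proved the statement, and you say so yourself --- and in fact neither does the paper: this statement is stated as a \emph{conjecture}, which the authors leave open, proving it only for the special class of self-similar sets satisfying the convex open set condition whose projections are intervals in every direction (Theorems \ref{main} and \ref{finite}). Within your proposal, everything surrounding the central estimate is sound: the lower bound from Marstrand's projection theorem via $\dim_H \mathrm{proj}_\theta F \leq \dim_H V_\theta F$, the trivial case $\dim_H F = 1$, the observation that $V_\theta F$ is the graph of the first-hitting function $g_\theta$ over $\mathrm{proj}_\theta F$ (together with the correct warning that this alone proves nothing), and the Borel--Cantelli/Hausdorff-content mechanism that converts an integrated counting bound into $\dim_H V_\theta F \leq s$ for a.e.\ $\theta$ and all rational $s>1$. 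But the entire content of the conjecture has been compressed into the single unproved assertion $\int_0^\pi N_\delta(\theta)\,d\theta \lesssim \delta^{-1}(\log \delta^{-1})^c$, which you first state as the ``heart of the scheme'' and then concede you cannot establish. Note that this assertion is strictly \emph{stronger} than the conjecture: it is a quantitative, every-scale, box-counting-type bound whose a.e.-$\theta$ consequence would control upper box dimension along a scale sequence, whereas the conjecture asks only for Hausdorff dimension; no technique in this paper or its cited literature yields such an occlusion estimate for a general compact $F$ with $\dim_H F > 1$. There is also a smaller soft spot in the setup: your notion of ``unshadowed'' (the strip ahead of $Q$ not completely filled by cubes of the cover) is a necessary condition phrased at a single scale, but genuine occlusion must be measured against the mass of $F$ itself across scales and across a $\delta$-net of roughly $\delta^{-1}$ directions, which is exactly where the transversality input you mention would have to enter.

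It is instructive to compare with how the paper handles its special case, since its argument is structurally the homogeneous analogue of your scheme but avoids averaging over $\theta$ entirely. In place of your per-scale count $N_\delta(\theta)$ of unshadowed cubes, the paper counts $(k,\theta)$-visible level-$k$ pieces $H_{\underline{i}}$, and the occlusion bookkeeping you identify as the obstruction is carried out deterministically for \emph{every} $\theta$ by three exact-self-similarity facts: Lemma \ref{tight} (interval projections guarantee that once a point is obscured it stays obscured, so the visible pieces really do cover $V_\theta F$), Lemma \ref{pullback} (visibility pulls back under $S_{i_1}^{-1}$ to level $k-1$ in a rotated direction, enabling induction on $k$), and Lemma \ref{strip} (a strip of width $2\epsilon$ meets at most $q(\epsilon)$ first-level pieces, an area-comparison substitute for your occlusion estimate). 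The induction gives $N(k,a) \leq c(1+N\epsilon^{-1}ka)\lambda(\epsilon)^k$ with $\lambda(\epsilon)=\max\{q(\epsilon), r_{\min}^{-1}\}$, and a stopping-time re-parametrisation of the IFS then drives the resulting dimension bound $\log\lambda(\delta)/(-\log r_{\max})$ down to $1$ as $\delta \to 0$. So the paper's verdict agrees with your closing diagnosis: exact scaling plus the interval-projection hypothesis is precisely what makes the occlusion count tractable, and absent that structure your key integral estimate remains open --- which means your proposal is a correct reduction and an accurate map of the difficulty, but not a proof.
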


This has been verified for several classes of fractal. Simple arguments, for example using rectifiability see \cite{vis_dim},  show that graphs of functions satisfy Conjecture \ref{vis conj} with the only exceptional direction being perpendicular to the $x$-axis. The conjecture holds for quasi-circles, see  \cite{vis_dim}, and this paper also addresses certain connected self-similar sets for which the group generated by the rotational part of the similarity transformations is finite.
In \cite{perc} it is shown that if $F$ is the random set obtained from the fractal percolation process then the conjecture holds almost surely. For an upper dimension bound,   the  part of a compact connected set $F$ that is visible from a  point $x$ has Hausdorff dimension at most
$ \frac{1}{2} +\sqrt{ \dim_H F -\frac{3}{4}}$ for almost all $x \in \mathbb{R}^2$, see \cite{toby}.

Here we give another class of sets for which Conjecture \ref{vis conj} holds.  We show that if a compact set $F$ is self-similar and satisfies the convex open set condition and if $\mathrm{proj}_\theta F$ is an interval for all $\theta$, then the Hausdorff and box-counting dimensions of the visible part $V_\theta F$ equal 1 for all $\theta$.   In particular, we allow the  group generated by the rotational part of the similarity transformations to be infinite and we do not require $F$ to be connected (although the projection condition is automatically satisfied when $F$ is connected).

\section{Main results}
Let  $\{S_i \}_{i=1}^{N}$ be an iterated function system (IFS) consisting of contracting similarity transformations of $\mathbb{R}^2$.  The well-known result of Hutchinson, see \cite{falconer,hutchinson}, states that  there exists a unique non-empty compact set $F$ satisfying
\[
F=\bigcup_{i=1}^N S_i(F).
\]
The set  $F$ is termed the {\em attractor} of the IFS and in our case, where the  $\{S_i \}$ are similarity transformations,  $F$ is termed {\em self-similar}. We write $H$ for the convex hull of $F$.

Recall that the IFS satisfies the {\em open set condition} if there exists a non-empty open set $O$ such that
\begin{equation}
\displaystyle\bigcup_{i=1}^{N} S_i(O) \subseteq O  \label{osc}
\end{equation}
with the union disjoint. (The open set condition guarantees that the Hausdorff dimension of the attractor $F$ equals its similarity dimension, see  \cite{falconer,hutchinson}).
If we can find a {\em convex}  open set satisfying (\ref{osc}) we say that the IFS satisfies the {\em convex open set condition}; it is easy to see that if this is the case we can always take the open set $O$ to be the interior of the convex hull of $F$ (provided $F$ is not a subset of a line), so that 
(\ref{osc}) becomes
\begin{equation}
\displaystyle\bigcup_{i=1}^{N} S_i(\mathrm{int}H) \subseteq \mathrm{int}H \label{cosc}
\end{equation}
with the union disjoint. Indeed, since the inclusion in (\ref{cosc}) is automatic when $H$ is the convex hull of the attractor $F$, the convex open set condition is equivalent to the sets $\{S_i(\mathrm{int}H) \}_{i=1}^{N} $ being disjoint.

\begin{thm}   \label{main}
Let  $\{S_i \}_{i=1}^{N}$ be an IFS of similarity transformations which satisfy the convex open set condition, with attractor $F$. 
Suppose that
$\mathrm{proj}_\theta F$ is an interval for all $\theta$.
Then
\[
\dim_H V_\theta F =\dim_B V_\theta F= 1
\]
for all $\theta$.
\end{thm}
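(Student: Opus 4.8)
The plan is to prove the two dimension inequalities separately, the lower bound being immediate and the upper bound carrying all the work.

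For the lower bound, observe that $V_\theta F$ is exactly the \emph{skyline} of $F$ in direction $\theta$: a point $x\in F$ lies in $V_\theta F$ if and only if it is the point of $F$ furthest along the line $x+\mathbb{R}\theta$ in the direction $\theta$. Consequently $\mathrm{proj}_\theta(V_\theta F)=\mathrm{proj}_\theta F$, since the furthest point on each fibre is always visible. As $\mathrm{proj}_\theta F$ is a non-degenerate interval by hypothesis, $\dim_H V_\theta F\geq\dim_H \mathrm{proj}_\theta V_\theta F=1$, and hence $\dim_B V_\theta F\geq\dim_H V_\theta F\geq 1$. It therefore remains only to prove $\dim_B V_\theta F\leq 1$, which I would do by showing that $N_\delta(V_\theta F)=O(\delta^{-1})$, where $N_\delta$ denotes the number of $\delta$-mesh squares meeting a set.

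Fix $\delta>0$ and let $\mathcal{I}_\delta$ be the stopping set of words $I$ for which $\mathrm{diam}(S_I H)$ first drops to order $\delta$, so that the convex pieces $\{S_I H: I\in\mathcal{I}_\delta\}$ have pairwise disjoint interiors (by the convex open set condition) and cover $F$. Two facts drive the argument. First, a \emph{projection lemma}: since $S_I$ is a similarity and $\mathrm{proj}_{\theta'}F$ is an interval in every direction $\theta'$, each $\mathrm{proj}_\theta(S_I F)=\mathrm{proj}_\theta(S_I H)=:P_I$ is an interval, and because the minimal width $w:=\min_{\theta'}|\mathrm{proj}_{\theta'}F|$ is strictly positive (by compactness of $F$ and of the circle of directions) its length satisfies $c'\delta\leq |P_I|\leq\delta$ for a constant $c'>0$; thus every cylinder projects onto a full sub-interval of length comparable to $\delta$. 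Second, a \emph{blocking lemma}: if $I,J\in\mathcal{I}_\delta$ and $P_I\cap P_J$ has non-empty interior, then since $S_I H$ and $S_J H$ are convex with disjoint interiors they may be separated by a line, and a short convexity argument shows that over the overlap one piece lies entirely on the $\theta$-side of the other; the interval property then guarantees that the upper piece's copy of $F$ meets \emph{every} fibre over the overlap, so no point of the lower piece is visible there. In other words, over each $y\in\mathrm{proj}_\theta F$ only the topmost cylinder through that fibre can contain a visible point. Combined with the projection lemma, this places $V_\theta F$ inside the $C\delta$-neighbourhood of the upper boundary (skyline) of the union $U_\delta=\bigcup_{I\in\mathcal{I}_\delta}S_I H$, since above any visible point nothing of $U_\delta$ beyond the current piece can lie on its fibre.

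The crux, and the step I expect to be hardest, is to bound $N_\delta(V_\theta F)$, equivalently to show that only $O(\delta^{-1})$ cylinders of $\mathcal{I}_\delta$ contain visible points, equivalently that the skyline of $U_\delta$ has length $O(1)$ uniformly in $\delta$. The blocking lemma already forbids two cylinders from alternating as the top piece (their order over any common projection is fixed), which controls the \emph{combinatorial} complexity of the skyline in terms of the number of visible cylinders but does not by itself bound that number: a priori many cylinders could stack over a single fibre and each poke out slightly to become briefly visible, exactly as an unconstrained family of disjoint convex sets can in a tall ``bar-chart'' configuration. Ruling this out is where the full force of the hypotheses must enter. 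I would exploit the uniform lower bound $|P_I|\geq c'\delta$ together with self-similarity: each visible cylinder's exposed portion is a scaled copy, by its ratio $r_I\approx\delta$, of a visible part of $F$ in a rotated direction, so the exposed pieces reproduce the same structure at the next scale, and feeding this back should yield a recursive inequality for the total horizontal-plus-vertical variation of the skyline that closes to an $O(1)$ bound independent of $\delta$, whence $N_\delta(V_\theta F)=O(\delta^{-1})$ and $\dim_B V_\theta F\leq 1$. Verifying that this recursion genuinely closes --- that the interval-projection condition forces the exposed cliffs between adjacent cylinders to have total height $O(1)$ rather than growing with the number of pieces --- is the technical heart of the proof.
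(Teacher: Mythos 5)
Your lower bound is correct (every fibre of $F$ in direction $\theta$ has a topmost point, which is visible, so $\mathrm{proj}_\theta V_\theta F=\mathrm{proj}_\theta F$), and your two preparatory lemmas are sound: the projection lemma and the blocking lemma together are essentially Lemma \ref{tight} of the paper, which encodes the fact that a point obscured by a convex piece whose projection is a full interval can never become visible again. But the proof has a genuine gap exactly where you say it does: the claim that your recursion ``closes to an $O(1)$ bound'' is never established, and that claim \emph{is} the theorem --- with only the projection and blocking lemmas one gets a trivial bound of order $\delta^{-2}$ visible cylinders (many cylinders can stack over a common fibre, each poking out briefly), i.e.\ only $\dim_B V_\theta F\leq 2$. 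Worse, the target you set yourself, $N_\delta(V_\theta F)=O(\delta^{-1})$ (equivalently, skyline variation $O(1)$ uniformly in $\delta$), is \emph{stronger} than what the paper proves, and there is reason to doubt your recursion can ever close in that form: when you pull a visible cylinder back by $S_I^{-1}$ you must consider visibility of the \emph{whole} configuration in a rotated direction, so the recursive quantity has to be uniform over all directions, and each renormalization step incurs a multiplicative loss --- the number of first-level pieces that can meet a thin strip, which is a constant $K>1$ --- so the bound degrades geometrically in the number of levels rather than staying $O(1)$.

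The paper's resolution is structured precisely to absorb this per-level loss, and it never proves a clean linear covering bound at any single scale. It defines a two-parameter counting function $N(k,a)=\sup_\theta N_\theta(k,a)$, the maximal number of $(k,\theta)$-visible cylinders whose visible witness projects into an interval of length $a$ (the supremum over $\theta$ is forced by the rotation issue above, via Lemma \ref{pullback}), and proves by induction on $k$ (Lemma \ref{keylem}) that $N(k,a)\leq c(1+N\epsilon^{-1}ka)\lambda(\epsilon)^k$, where $\lambda(\epsilon)=\max\{q(\epsilon),r_{\min}^{-1}\}$ and $q(\epsilon)$ is the strip-counting constant of Lemma \ref{strip} (an area/disjointness argument). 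This yields only $\overline{\dim}_B V_\theta F\leq \log\lambda(\epsilon)/(-\log r_{\max})$, which is strictly larger than $1$ (Lemma \ref{box}). Dimension exactly $1$ then comes from a diagonal trick: replace the IFS by its $\delta$-stopping $\{S_{\underline i}:\underline i\in T_\delta\}$, for which $q(\delta)\leq K\delta^{-1}$ and all ratios are comparable to $\delta$, so the bound becomes $\max\{1-\log K/\log\delta,\ 1+\log r_{\min}/\log\delta\}$, and let $\delta\to 0$. In effect the method tolerates a loss of $\delta^{-\eta(\delta)}$ with $\eta(\delta)\to 0$ at each scale rather than proving your $O(\delta^{-1})$ bound; to repair your argument you would either have to prove the $O(1)$ variation claim outright (which does not follow from the paper and may be false), or restructure it into this two-parameter form --- induction over levels of a stopped IFS, uniform over directions, with the limit $\delta\to0$ taken at the very end.
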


If $F$ is connected then $\mathrm{proj}_\theta F$ is always an interval so  the following corollary is immediate.
\begin{cor}
For every connected self-similar set $F \subset \mathbb{R}$ for which the convex open set condition holds
\[
\dim_H V_\theta F =\dim_B V_\theta F= 1
\]
for all $\theta$.
\end{cor}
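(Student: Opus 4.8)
The plan is to prove the two bounds $\dim_B V_\theta F \leq 1$ and $\dim_H V_\theta F \geq 1$ separately; since $\dim_H \leq \dim_B$ always holds, together these give $\dim_H V_\theta F = \dim_B V_\theta F = 1$.

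\textbf{Lower bound.} First I would show that $\mathrm{proj}_\theta$ maps $V_\theta F$ \emph{onto} $\mathrm{proj}_\theta F$. Fix $y \in \mathrm{proj}_\theta F$; then $F \cap \mathrm{proj}_\theta^{-1}(y)$ is a non-empty compact subset of the line $\{y + r\theta : r \in \mathbb{R}\}$, so it contains a point $x$ maximising $r$. No point of $F$ lies strictly beyond $x$ in direction $\theta$ along this line, so $(x+l_\theta)\cap F = \{x\}$, i.e. $x \in V_\theta F$ with $\mathrm{proj}_\theta x = y$. Hence $\mathrm{proj}_\theta V_\theta F = \mathrm{proj}_\theta F$, which by hypothesis is a non-degenerate interval (note $F$ is not contained in a line, since the convex open set condition requires $\mathrm{int}\,H \neq \emptyset$). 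As orthogonal projection is $1$-Lipschitz and hence does not increase dimension, $\dim_H V_\theta F \geq \dim_H \mathrm{proj}_\theta V_\theta F = 1$, and likewise $\dim_B V_\theta F \geq 1$.

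\textbf{Upper bound: reduction to counting visible cylinders.} Without loss of generality take $\theta$ to be the upward vertical direction, so that $\mathrm{proj}_\theta$ is projection to the $x$-axis and $V_\theta F$ is the \emph{top profile} of $F$: with $[a,b]=\mathrm{proj}_\theta F$ and $g(t)=\max\{s:(t,s)\in F\}$ (attained by compactness), the argument above identifies $V_\theta F=\{(t,g(t)):t\in[a,b]\}$ as the graph of $g$. For $\delta>0$ let $\Lambda_\delta$ be the antichain of words $\mathbf i=i_1\cdots i_k$ with $r_{\mathbf i}\leq\delta<r_{i_1\cdots i_{k-1}}$, where $r_{\mathbf i}$, $S_{\mathbf i}$, $F_{\mathbf i}=S_{\mathbf i}(F)$ and $H_{\mathbf i}=S_{\mathbf i}(H)$ are the usual products, compositions and images. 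By the convex open set condition the hulls $\{H_{\mathbf i}\}_{\mathbf i\in\Lambda_\delta}$ have pairwise disjoint interiors; each is a similar copy of $H$ of diameter $\asymp\delta$ (hence fat, containing a disc of radius $\asymp\delta$), and since $\mathrm{proj}_\psi F$ is an interval for \emph{every} $\psi$ each shadow $\mathrm{proj}_\theta H_{\mathbf i}=\mathrm{proj}_\theta F_{\mathbf i}$ is an interval of length $\asymp\delta$, with $\bigcup_{\mathbf i\in\Lambda_\delta}\mathrm{proj}_\theta F_{\mathbf i}=[a,b]$. As $V_\theta F\subseteq F\subseteq\bigcup_{\mathbf i\in\Lambda_\delta}H_{\mathbf i}$, covering each contributing hull by boundedly many $\delta$-balls gives $N(V_\theta F,\delta)\leq C\,\#\{\mathbf i\in\Lambda_\delta:F_{\mathbf i}\cap V_\theta F\neq\emptyset\}$, so it suffices to bound the number of \emph{visible cylinders} by $O(\delta^{-1})$; this yields $\dim_B V_\theta F\leq 1$ and completes the proof.

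\textbf{The crux and main obstacle.} A cylinder $F_{\mathbf i}$ meets $V_\theta F$ precisely when the top boundary of $H_{\mathbf i}$ attains the upper envelope $g$ somewhere, so each visible cylinder is topmost over a non-empty set $U_{\mathbf i}\subseteq\mathrm{proj}_\theta F_{\mathbf i}$; these sets are disjoint and, because the shadows cover $[a,b]$, satisfy $\bigcup_{\mathbf i}U_{\mathbf i}=[a,b]$ up to endpoints. The cylinders topmost over a set of length $\geq c\delta$ number at most $(b-a)/(c\delta)=O(\delta^{-1})$, so everything reduces to controlling \emph{slivers}, cylinders that poke above their neighbours only across an interval of length $\ll\delta$. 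This is the hard part. The top boundary of each $H_{\mathbf i}$ is a concave arc (a similar copy of the top of $H$), and two concave arcs cross at most twice, so the upper envelope is a Davenport--Schinzel configuration of order two; I would combine this with the fatness and disjointness of the hulls and, crucially, with the full-shadow interval condition maintained at \emph{every} scale to run a charging argument assigning each sliver to a definite length of $[a,b]$, thereby bounding the slivers by $O(\delta^{-1})$ as well. I expect the genuine difficulty to lie exactly here: pure convexity and disjointness of equal-size convex sets do not by themselves forbid many thin pokes, so the self-similar structure together with the hypothesis that $\mathrm{proj}_\psi F$ is an interval for all $\psi$ must be used essentially to rule out an anomalous accumulation of slivers.
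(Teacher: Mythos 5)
Your lower bound is correct (the topmost point above each $y\in\mathrm{proj}_\theta F$ is visible, so $\mathrm{proj}_\theta V_\theta F=\mathrm{proj}_\theta F$, an interval because $F$ is connected), and your reduction of the upper bound to counting visible cylinders of the stopping $\Lambda_\delta$ is sound; indeed this is the skeleton of the paper's argument, which deduces the corollary from Theorem \ref{main} (connectedness gives interval projections in every direction) and proves Theorem \ref{main} by exactly such a stopping-time count. But there is a genuine gap precisely where you say "this is the hard part": the bound on slivers is never proved, and the tools you name cannot prove it. The envelope/Davenport--Schinzel bound is linear in the \emph{number of arcs}, and the number of hulls in $\Lambda_\delta$ is of order $\delta^{-s}$ with $s=\dim_H F>1$, so "pairwise non-crossing concave tops" yields at best $O(\delta^{-s})$ envelope pieces --- far weaker than the $O(\delta^{-1})$ you need. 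Fatness and disjointness of equal-sized convex sets do not forbid many thin pokes either, as you acknowledge; so the crux of the proof is simply missing, and no single-scale charging argument is supplied (or, as far as the paper shows, available).

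What fills this gap in the paper is an inherently multi-scale and cross-directional induction, not a fixed-scale count. Lemma \ref{pullback} pulls a visible $k$th-level set back to a visible $(k-1)$th-level set \emph{viewed from a new direction} $S_{i_1}^{-1}l_{\theta}$ --- rotations make the direction change unavoidable, which is why the quantity $N(k,a)$ is defined with a supremum over all $\theta$; Lemma \ref{strip} bounds by $q(\epsilon)$ the number of first-level hulls meeting any strip of width comparable to $\epsilon$; and the induction in Lemma \ref{keylem} lets every sliver contribute an additive constant, these constants compounding over $k$ levels into the factor $\lambda(\epsilon)^k$ with $\lambda(\epsilon)=\max\{q(\epsilon),r_{\min}^{-1}\}$. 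Note that the resulting estimate is strictly weaker than your target: applied to the stopped IFS $\{S_{\underline{i}}\}_{\underline{i}\in T_\delta}$ with $\epsilon=\delta$ it gives roughly $k(K\delta^{-1})^k$ visible sets at scale $\delta^k$, i.e.\ covering numbers $O(\eta^{-1-\epsilon'})$ where $\epsilon'\to 0$ as $\delta\to 0$, never $O(\eta^{-1})$; this suffices because $\overline{\dim}_B V_\theta F\leq \log\lambda(\delta)/(-\log\delta)\to 1$. So your plan aims at a sharp covering bound that the paper neither proves nor needs, while omitting the inductive mechanism that actually works. (Two smaller slips: the corollary's hypothesis is connectedness, from which interval projections follow since continuous images of connected sets are connected --- it is not itself a hypothesis; and the convex open set condition does \emph{not} force $\mathrm{int}\,H\neq\emptyset$ --- a line segment satisfies it --- the paper instead assumes outright that $F$ is not contained in a line, which is needed to exclude degenerate cases such as a segment viewed along its own direction.)
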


If we are only interested in visibility from certain directions, the condition on projection to an interval can be weakened if the group generated by the rotational part of the similarity transformations is finite. Let $G$ be the subgroup of the orthogonal group $O(2)$ generated by the rotational or reflectional components of $\{S_i \}_{i=1}^{N}$ (regarding each similarity $S_i$ as a composition of  a homothety (i.e. a similarity with no rotational component) and a rotation or reflection).

\begin{thm}  \label{finite}
Let  $\{S_i \}_{i=1}^{N}$ be an IFS of similarity transformations, which satisfy the convex open set condition, with attractor $F$. Suppose that the subgroup $G$ is finite. Let $\theta$ be a given direction and suppose that 
$\mathrm{proj}_{g(\theta)} F$ is an interval for all $g \in G$.
Then
\[
\dim_H V_\theta F =\dim_B V_\theta F= 1.\]
\end{thm}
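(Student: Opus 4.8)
The plan is to prove Theorem~\ref{finite} by leveraging the finiteness of $G$ to reduce to a situation where Theorem~\ref{main} essentially applies. Let me think about what the statement is asking and how the finite-group hypothesis helps.

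We have an IFS satisfying the convex open set condition, but we no longer assume $\mathrm{proj}_\theta F$ is an interval for *all* $\theta$. Instead we fix a direction $\theta$ and assume $\mathrm{proj}_{g(\theta)} F$ is an interval for all $g$ in the finite group $G$. The conclusion is that $\dim_H V_\theta F = \dim_B V_\theta F = 1$.

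Let me understand the role of the projection-to-interval condition in Theorem~\ref{main}. The key geometric fact must be: when $\mathrm{proj}_\theta F$ is an interval, the visible part $V_\theta F$ behaves like the boundary/graph of a convex-ish region relative to direction $\theta$. The upper bound $\dim_H V_\theta F \le 1$ and lower bound via projections ($\dim_H V_\theta F \ge \dim_H \mathrm{proj}_\theta V_\theta F$) should combine. Since $\mathrm{proj}_\theta F$ is an interval, $\dim_H \mathrm{proj}_\theta F = 1$; and $V_\theta F$ projects onto all of $\mathrm{proj}_\theta F$ (each projection line hits $F$, and the "last" point in direction $\theta$ is visible), giving $\dim_H V_\theta F \ge 1$. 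The hard part of Theorem~\ref{main} is the *upper* bound $\le 1$, presumably via a self-similar covering/box-counting argument using the OSC and the interval condition on *all* subpieces $S_i(F)$, whose convex hulls are rotated copies of $H$.

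Here's why finiteness of $G$ matters. Let me think about the proof of the main theorem's upper bound. When we iterate the IFS, each cylinder set $S_{\mathbf{i}}(F)$ (for a word $\mathbf{i}$) is a scaled-rotated copy of $F$, and its convex hull is the correspondingly scaled-rotated copy of $H$. To bound $V_\theta F$ we need to understand visibility within each piece, which involves the projection of that piece in direction $\theta$ — equivalently, the projection of $F$ in direction $g^{-1}(\theta)$ where $g$ is the rotational part of $S_{\mathbf{i}}$. In the general Theorem~\ref{main}, we assumed ALL such projections are intervals, covering every possible rotation that appears. But if $G$ is finite, only finitely many rotational parts $g \in G$ ever arise, no matter how long the word $\mathbf{i}$; so it suffices to assume the interval condition for the finitely many directions $\{g(\theta): g \in G\}$. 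Thus $G$ finite is exactly what lets us replace "interval for all $\theta$" by "interval for the finite orbit $G\theta$".

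My plan, then: First I would set up the symbolic coding, writing $S_{\mathbf{i}} = \lambda_{\mathbf{i}} g_{\mathbf{i}} + t_{\mathbf{i}}$ with contraction ratio $\lambda_{\mathbf{i}}$, orthogonal part $g_{\mathbf{i}} \in G$, and translation $t_{\mathbf{i}}$. Because $G$ is finite, $g_{\mathbf{i}}$ ranges over a finite set as $\mathbf{i}$ varies. Second, I would extract from the (assumed) proof of Theorem~\ref{main} precisely the geometric lemma that uses the interval hypothesis — most likely a statement bounding the box-counting contribution of a single cylinder to the visible part in terms of the projection of $F$ in the relevant rotated direction being an interval. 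Third, I would verify that for visibility in direction $\theta$, the only projections of $F$ that enter are those in directions $g^{-1}(\theta)$ for $g \in G$ (a finite set), so the hypothesis "$\mathrm{proj}_{g(\theta)}F$ is an interval for all $g \in G$" supplies exactly what is needed for every cylinder. Then the counting/covering argument of Theorem~\ref{main} goes through verbatim to give both $\dim_H V_\theta F \le 1$ and $\dim_B V_\theta F \le 1$; the lower bound $\ge 1$ follows from Marstrand's projection theorem applied along direction $\theta$ (where $\mathrm{proj}_\theta F$ is an interval, since $e \in G$), exactly as in Theorem~\ref{main}.

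**The main obstacle** I anticipate is making precise the claim that "only finitely many projection directions are relevant." One must confirm that the visibility structure in direction $\theta$, when decomposed over cylinders $S_{\mathbf{i}}(F)$, depends on each piece only through the projection of the base set $F$ in the single direction $g_{\mathbf{i}}^{-1}(\theta)$, and that the *relative* positions of different cylinders (which govern mutual occlusion, the truly delicate part of any visibility argument) are still controlled by the convex open set condition independently of the rotations involved. I expect the cleanest route is to observe that the entire argument of Theorem~\ref{main} is ultimately a statement about the finite collection of rotated convex hulls $\{g(H): g \in G\}$ and their projections; once the interval condition holds for this finite collection, the self-similar covering estimate is insensitive to which particular $g \in G$ labels a given cylinder, and the proof reduces to that of Theorem~\ref{main}.
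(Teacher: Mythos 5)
Your proposal is correct and takes essentially the same route as the paper: the paper proves Theorem~\ref{finite} by observing that, since pullbacks of the viewing direction under Lemma~\ref{pullback} remain in the finite orbit $\{g(\theta):g\in G\}$, one simply replaces the supremum over all directions in the definition (\ref{ndef}) of $N(k,a)$ by $\max_{g\in G}N_{g(\theta)}(k,a)$, after which the argument of Theorem~\ref{main} proceeds unchanged. This is precisely your observation that only the finitely many projection directions in the orbit of $\theta$ ever arise, so the interval hypothesis on that finite set suffices.
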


In particular, if each  $S_i$ is a homothety, so that $G$ is the trivial group, we  only need $\mathrm{proj}_\theta F$ to be an interval for the direction of projection under consideration, $\theta$.

\section{Proofs}
We may assume throughout that $F$ is not a subset of a straight line. Write $\{r_i \}_{i=1}^{N}$ for the contraction ratios of $\{S_i \}_{i=1}^{N}$ and let $r_{min}=\min_{i}r_i$ and $r_{max} = \max_{i} r_i$.  We index sets in the construction of $F$ and the points in $F$ in the usual way.  We write $\Sigma^{\infty}$ to denote the sequence space defined by
\[
\Sigma^{\infty}=\{( i_1 ,i_2, i_3 \dots ): 1 \leq i_j \leq N \text{ for all } j\}
\]
and $\Sigma^{k}$ to denote the corresponding sequences of length $k$ 
\[
\Sigma^{k}=\{ (i_1, i_2,\dots , i_k ): 1 \leq i_j \leq N \text{ for all } j =1, \dots , k\},
\]
and
\[
\Sigma= \bigcup_{k \in \mathbb{N}} \Sigma^k
\]
for the set of all finite sequences

For $\underline{i}=(i_1,\dots,i_k) \in \Sigma^k$ we write $H_{\underline{i}} = S_{\underline{i}} (H) \equiv S_{i_1} \circ \dots \circ S_{i_k}(H)$, so that $\{H_{\underline{i}} \}_{\underline{i} \in \Sigma^k}$ are the $k$th-level sets in the usual construction of $F$ from the iterated images of $H$.
Then the map $x:\Sigma^{\infty} \to F$
\[
x( i_1 ,i_2, i_3 \dots )=\bigcap_{k=1}^{\infty} S_{i_1} \circ \cdots \circ S_{i_k} (H)
\]
is surjective but not necessarily bijective.  

\begin{lma} \label{tight}
For all $\underline{i} \in \Sigma^k$ and all directions $\theta $ 
\begin{equation} 
\mathrm{proj}_\theta H_{\underline{i}} = \mathrm{proj}_\theta \displaystyle\bigcup_{j=1}^{N} H_{\underline{i}j}. \label{projs}
\end{equation} 
\end{lma}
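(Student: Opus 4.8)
\noindent\emph{Sketch of the approach.} I work in the setting of Theorem \ref{main}, so I may assume that $\mathrm{proj}_\psi F$ is an interval for \emph{every} direction $\psi$. The plan is to recast this hypothesis as the statement that $F$ and its convex hull $H$ have identical projections in every direction, to check that this property passes to every similar copy $S_{\underline{i}}(F)$ of $F$, and then to read off (\ref{projs}) directly from the basic IFS relation $F=\bigcup_j S_j(F)$. A useful preliminary remark is that for any bounded set $A$ and any $\psi$ one has $\mathrm{proj}_\psi(\mathrm{conv}\,A)=\mathrm{conv}\,(\mathrm{proj}_\psi A)$, since orthogonal projection is affine and affine maps carry convex hulls to convex hulls; hence ``$\mathrm{proj}_\psi A$ is an interval'' is equivalent to ``$\mathrm{proj}_\psi A=\mathrm{proj}_\psi(\mathrm{conv}\,A)$'', an interval being exactly a convex subset of a line. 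In particular, because $H=\mathrm{conv}\,F$, the hypothesis yields $\mathrm{proj}_\psi F=\mathrm{proj}_\psi H$ for all $\psi$.

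The step I expect to be the main obstacle is to show that having interval projections in every direction is invariant under similarities, since this is the only place where the arbitrary rotational parts must be controlled. Writing a similarity as $S(x)=cRx+b$ with $c>0$, $R\in O(2)$, $b\in\mathbb{R}^2$, and letting $P_\psi$ be the linear orthogonal projection onto $\Pi_\psi$, I would first verify the identity $P_\psi R=R\,P_{R^{-1}\psi}$, which holds because conjugating the projection onto $\Pi_\psi$ by $R$ gives the projection onto $R^{-1}\Pi_\psi=\Pi_{R^{-1}\psi}$. This gives
\[
\mathrm{proj}_\psi\big(S(A)\big)=c\,R\,\mathrm{proj}_{R^{-1}\psi}(A)+P_\psi b,
\]
so $\mathrm{proj}_\psi(S(A))$ is the image of $\mathrm{proj}_{R^{-1}\psi}(A)$ under an invertible affine map between the lines $\Pi_{R^{-1}\psi}$ and $\Pi_\psi$. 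Such a map sends intervals to intervals, so if $\mathrm{proj}_\chi A$ is an interval for all $\chi$ then $\mathrm{proj}_\psi(S(A))$ is an interval for all $\psi$.

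Applying this invariance with $A=F$ to the similarities $S_{\underline{i}}$ and $S_{\underline{i}j}$ shows that $F_{\underline{i}}:=S_{\underline{i}}(F)$ and $F_{\underline{i}j}:=S_{\underline{i}j}(F)$ again project to intervals in every direction. Since $S_{\underline{i}}$ is affine, I also have $H_{\underline{i}}=S_{\underline{i}}(H)=\mathrm{conv}\,F_{\underline{i}}$ and $H_{\underline{i}j}=\mathrm{conv}\,F_{\underline{i}j}$, while the IFS relation gives $F_{\underline{i}}=S_{\underline{i}}\big(\bigcup_j S_j(F)\big)=\bigcup_j F_{\underline{i}j}$. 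Combining these with the equivalence ``interval projection $\Leftrightarrow$ same projection as convex hull'' and with the fact that projection of a union is the union of the projections, I obtain
\begin{align*}
\mathrm{proj}_\theta H_{\underline{i}}
=\mathrm{proj}_\theta F_{\underline{i}}
=\bigcup_{j=1}^N \mathrm{proj}_\theta F_{\underline{i}j}
=\bigcup_{j=1}^N \mathrm{proj}_\theta H_{\underline{i}j}
=\mathrm{proj}_\theta \bigcup_{j=1}^N H_{\underline{i}j},
\end{align*}
which is precisely (\ref{projs}).

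Finally, it is worth noting where the content lies: the inclusion $\supseteq$ is the trivial one, since $S_j(H)\subseteq H$ forces $H_{\underline{i}j}\subseteq H_{\underline{i}}$ and hence $\mathrm{proj}_\theta\bigcup_j H_{\underline{i}j}\subseteq\mathrm{proj}_\theta H_{\underline{i}}$. The substantive direction is the reverse inclusion, and it is exactly here that the interval hypothesis is indispensable: it guarantees that the projections of the sub-pieces leave no gaps and so cover the entire projection of $H_{\underline{i}}$. I would also remark that for the finite-group situation of Theorem \ref{finite} the same chain goes through for a single prescribed $\theta$, since then only the finitely many directions $R_{\underline{i}}^{-1}\theta$ arise, and these all lie among $\{g(\theta):g\in G\}$.
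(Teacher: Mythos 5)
Your proof is correct and takes essentially the same approach as the paper: both arguments rest on the equivalence ``$\mathrm{proj}_\theta F$ is an interval $\Leftrightarrow$ $\mathrm{proj}_\theta F=\mathrm{proj}_\theta H$'' together with the IFS relation $F=\bigcup_j S_j(F)$, and both absorb the rotational parts of the similarities by invoking the hypothesis for \emph{all} directions. The only difference is organisational: the paper establishes the identity at the first level by a sandwich of inclusions ($\mathrm{proj}_\theta H=\mathrm{proj}_\theta F=\mathrm{proj}_\theta\bigcup_j S_jF\subseteq\mathrm{proj}_\theta\bigcup_j S_jH\subseteq\mathrm{proj}_\theta H$) and then ``takes similar images under $S_{\underline{i}}$'', whereas you make that compressed final step explicit via the pullback identity $P_\psi R=RP_{R^{-1}\psi}$ and run the chain of equalities directly at level $\underline{i}$.
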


\begin{proof} Since $\mathrm{proj}_{\theta}F$ is an interval,
$$
\mathrm{proj}_\theta H=  \mathrm{proj}_{\theta}F 
= \mathrm{proj}_\theta \displaystyle\bigcup_{j=1}^{N} S_j F 
\subseteq  \mathrm{proj}_\theta \displaystyle\bigcup_{j=1}^{N} S_j H  
\subseteq  \mathrm{proj}_\theta H,$$
taking the closure of (\ref{cosc}). 
Taking similar images under $S_{\underline i}$ gives (\ref{projs}).
\end{proof}

In particular, Lemma \ref{tight} means that once a point $x \in F$ is `obscured'  when viewed from direction $\theta$  by some set $H_{\underline{i}}$, it can never become `visible' at a later stage in the construction and hence cannot belong to $V_\theta F$.

We now introduce the notion of $(k,\theta)$-visibility to indicate which $k$th level sets are visible from direction $\theta$.

\begin{defn}
Let $\underline{i} = (i_1,i_2, \dots, i_k) \in \Sigma^k$ and let $\theta$ be a direction.  The $k$th-level set $H_{\underline{i}}$ is \emph{$(k,\theta)$-visible} if there exists a point $x \in H_{\underline{i}}$ such that
\[
(x+ l_\theta) \cap \bigcup_{\underline{j} \in \Sigma^k}H_{\underline{j}}=\{ x\}.
\]
For  $k = 0,1,2,\ldots$  define $V_\theta^k = \{\underline{i} \in \Sigma^k:H_{\underline{i}} \text{ is $(k,\theta)$-visible} \}$.
\end{defn}

The sets  indexed by $V_\theta^k$ provide covers for the  $V_\theta F$ which will eventually give the upper bounds for the dimensions.

\begin{lma} \label{vis}
For all  $\theta$  
\begin{equation}
V_\theta F \subseteq \bigcap_{k=1}^{\infty} \bigcup_{\underline{i} \in V_\theta^k} H_{\underline{i}}. \label{inc}
\end{equation}
\end{lma}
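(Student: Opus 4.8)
The plan is to fix $k \geq 1$ and an arbitrary point $x \in V_\theta F$ and to exhibit some $\underline{i} \in V_\theta^k$ with $x \in H_{\underline{i}}$; intersecting over all $k$ then yields (\ref{inc}). Throughout I order points in the direction $\theta$ by the value $z \cdot \theta$, and I write $\ell$ for the full line through $x$ parallel to $\theta$, noting that $z \cdot \theta$ parametrises $\ell$ bijectively. Since $x \in V_\theta F$ we have $(x+l_\theta) \cap F = \{x\}$, so every point $z \in F \cap \ell$ with $z \neq x$ satisfies $z \cdot \theta < x \cdot \theta$; that is, $x$ is the point of $F \cap \ell$ with the largest $\theta$-coordinate.

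Next I would locate the highest-reaching level-$k$ hull on $\ell$. There are only finitely many $\underline{j} \in \Sigma^k$, each $H_{\underline{j}}$ is compact and convex, and $x$ itself lies on $\ell$ and in some level-$k$ hull, so at least one of these hulls meets $\ell$. Among the finitely many $\underline{j} \in \Sigma^k$ with $H_{\underline{j}} \cap \ell \neq \emptyset$ I choose $\underline{i}^*$ maximising $\max\{ z \cdot \theta : z \in H_{\underline{j}} \cap \ell \}$, and let $y^*$ attain this maximum. Then every point of $\bigcup_{\underline{j} \in \Sigma^k} H_{\underline{j}}$ lying on $\ell$ has $\theta$-coordinate at most $y^* \cdot \theta$, so the half-line $y^* + l_\theta$, which runs in the direction of increasing $\theta$-coordinate, meets this union only at $y^*$. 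Hence $H_{\underline{i}^*}$ is $(k,\theta)$-visible, i.e. $\underline{i}^* \in V_\theta^k$.

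The main step is to verify that this particular hull contains $x$. Iterating Lemma \ref{tight} gives $\mathrm{proj}_\theta H_{\underline{i}^*} = \mathrm{proj}_\theta \bigcup_{\underline{j} \in \Sigma^m} H_{\underline{i}^*\underline{j}}$ for every $m$, and letting the level $m$ tend to infinity (the compact unions decrease to $S_{\underline{i}^*}(F)$) yields $\mathrm{proj}_\theta H_{\underline{i}^*} = \mathrm{proj}_\theta S_{\underline{i}^*}(F)$; the hull projects onto no more than its own fractal piece does. Since $y^* \in H_{\underline{i}^*} \cap \ell$, the projection $\mathrm{proj}_\theta x = \mathrm{proj}_\theta y^*$ lies in $\mathrm{proj}_\theta H_{\underline{i}^*} = \mathrm{proj}_\theta S_{\underline{i}^*}(F)$, so there is a point $z \in S_{\underline{i}^*}(F) \subseteq F$ lying on $\ell$. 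Because $z \in F \cap \ell$ we have $z \cdot \theta \leq x \cdot \theta$ by the first paragraph, while $y^* \cdot \theta \geq x \cdot \theta$ by the choice of $y^*$. As $H_{\underline{i}^*}$ is convex and both $z$ and $y^*$ lie in $H_{\underline{i}^*} \cap \ell$, the segment of $\ell$ between them lies in $H_{\underline{i}^*}$; this segment contains the unique point of $\ell$ with $\theta$-coordinate $x \cdot \theta$, namely $x$. Thus $x \in H_{\underline{i}^*}$, and combined with $\underline{i}^* \in V_\theta^k$ this completes the step.

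I expect the only genuine obstacle to be this last paragraph, namely showing that the hull reaching highest on $\ell$ actually contains $x$. This is not automatic: a level-$k$ hull may protrude in direction $\theta$ well beyond the fractal piece it encloses, so the top point $y^*$ need neither equal $x$ nor even lie in $F$. The resolution is precisely the tightness of projections recorded in Lemma \ref{tight}---it forces $H_{\underline{i}^*}$ to cover, in projection, a genuine fractal point $z \in S_{\underline{i}^*}(F)$ on $\ell$ lying no higher than $x$, which together with $y^*$ sandwiches $x$ inside the convex set $H_{\underline{i}^*}$.
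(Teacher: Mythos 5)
Your proof is correct, but it takes a genuinely different route from the paper's. The paper argues by contraposition in one line: if $x$ lies in some $H_{\underline{i}}$ with $\underline{i} \in \Sigma^k \setminus V_\theta^k$, then by the definition of $(k,\theta)$-visibility the ray $x+l_\theta$ meets the union of level-$k$ hulls at a second point, and ``applying Lemma \ref{tight} repeatedly'' this obstruction is pushed down to an actual point of $F$ on the ray, so $x \notin V_\theta F$; the iteration itself is left to the reader. You instead argue directly: for a visible $x$ you exhibit a specific element of $V_\theta^k$ containing $x$, namely the hull whose intersection with the line $\ell$ through $x$ reaches highest in direction $\theta$. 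The extremal choice makes the $(k,\theta)$-visibility of that hull immediate, and the substantive step --- showing it contains $x$ --- comes from iterating Lemma \ref{tight}, passing to the limit to get $\mathrm{proj}_\theta H_{\underline{i}^*} = \mathrm{proj}_\theta S_{\underline{i}^*}(F)$, and a convexity sandwich. Your version is longer but arguably more watertight: the paper's contrapositive, read literally, asserts that \emph{every} hull containing a visible point is visible, and filling in its iteration runs into a delicate case when the obstructing hull $H_{\underline{j}}$ also contains $x$ --- there the projection identity only produces a point of $S_{\underline{j}}(F)$ somewhere on the full line $\ell$, possibly at or below $x$, so no contradiction with visibility of $x$ is immediate (this is exactly the half-line versus full-line issue you flag). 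Your argument never needs an obstructing point of $F$ above $x$; it needs only a point of $F$ on $\ell$ at height at most that of $x$, which the projection identity always supplies, and it proves precisely what the lemma requires (some visible hull contains $x$) rather than the stronger statement. Both proofs share the same engine, Lemma \ref{tight}; what you add is the extremal selection and the convexity argument that make the details close up cleanly.
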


\begin{proof}
For each $k$, if $x \in  \bigcup_{\underline{i} \in \Sigma^k \setminus V_\theta^k} H_{\underline{i}}$ for some $k$, it follows using Definition 3.2 and applying Lemma \ref{tight} repeatedly that $ x \notin V_\theta F$.
\end{proof}
The following lemma relates the visibility of the $H_{\underline{i}}$ to the visibility of their inverse iterates.

\begin{lma}
Let $\theta_1$ be a direction, let $k \in \mathbb{N}$ and let $\underline{i} = (i_1, \dots, i_k) \in \Sigma^k$.  If $H_{\underline{i}}$ is $(k,\theta_1)$-visible then $S_{i_1}^{-1} H_{\underline{i}}$ is $(k-1,\theta_2)$-visible,  where $\theta_2$ is the direction of the half-line $S_{i_1}^{-1} l_{\theta_1}$.
\label{pullback}
\end{lma}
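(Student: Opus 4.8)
The plan is to transport the visibility of $H_{\underline{i}}$ through the invertible similarity $S_{i_1}^{-1}$, exploiting two facts: an invertible similarity maps a half-line to a half-line, and the family of $(k-1)$th-level cylinders sits inside the $S_{i_1}^{-1}$-image of the $k$th-level cylinders. The whole statement is then essentially a monotonicity observation, namely that removing obstructing cylinders can only make a point more visible.

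First I would record how $S_{i_1}^{-1}$ acts on a half-line. Writing $S_{i_1}(y)=r_{i_1}Ry+t$ with $R \in O(2)$ its orthogonal part, the inverse is the similarity $z \mapsto r_{i_1}^{-1}R^{-1}(z-t)$. A direct computation then shows that for any point $x$,
\[
S_{i_1}^{-1}(x+l_{\theta_1}) = S_{i_1}^{-1}(x) + l_{\theta_2}, \qquad \theta_2 = R^{-1}\theta_1,
\]
since $r_{i_1}^{-1}>0$ and $R^{-1}\theta_1$ is again a unit vector; here $\theta_2$ is precisely the direction of $S_{i_1}^{-1}l_{\theta_1}$ as in the statement. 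This is the only place the geometry of similarities is needed.

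Next I would invoke the hypothesis. By $(k,\theta_1)$-visibility there is $x \in H_{\underline{i}}$ with $(x+l_{\theta_1}) \cap \bigcup_{\underline{j}\in\Sigma^k}H_{\underline{j}}=\{x\}$. Setting $x'=S_{i_1}^{-1}(x)$ and applying the bijection $S_{i_1}^{-1}$ together with the half-line identity gives
\[
(x'+l_{\theta_2}) \cap \bigcup_{\underline{j}\in\Sigma^k} S_{i_1}^{-1}H_{\underline{j}} = \{x'\}.
\]
The key combinatorial step is to split the union by first symbol: for the block with first symbol $i_1$ one has $S_{i_1}^{-1}H_{i_1\underline{j}'}=H_{\underline{j}'}$ for $\underline{j}'\in\Sigma^{k-1}$, so that block maps exactly onto $\bigcup_{\underline{j}'\in\Sigma^{k-1}}H_{\underline{j}'}$, while the remaining blocks only contribute further sets. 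Hence $\bigcup_{\underline{j}'\in\Sigma^{k-1}}H_{\underline{j}'}$ is a subset of $\bigcup_{\underline{j}\in\Sigma^k}S_{i_1}^{-1}H_{\underline{j}}$, and intersecting the displayed equation with this smaller collection preserves the conclusion, giving $(x'+l_{\theta_2}) \cap \bigcup_{\underline{j}'\in\Sigma^{k-1}}H_{\underline{j}'} \subseteq \{x'\}$. Finally, since $x'=S_{i_1}^{-1}(x)\in S_{i_1}^{-1}H_{\underline{i}}=H_{(i_2,\dots,i_k)}$, the point $x'$ lies in the $(k-1)$th-level collection, so the intersection in fact equals $\{x'\}$; this is exactly $(k-1,\theta_2)$-visibility of $S_{i_1}^{-1}H_{\underline{i}}$.

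I expect the main obstacle to be purely bookkeeping rather than conceptual: one must confirm that $S_{i_1}^{-1}$ sends $x+l_{\theta_1}$ onto a genuine half-line with basepoint $x'$ and direction $\theta_2$, and not onto a full line or a half-line with a shifted basepoint. Once $S_{i_1}^{-1}$ is written in affine form and the sign of $r_{i_1}^{-1}$ is noted, this is immediate, and the rest is the inclusion of unions described above.
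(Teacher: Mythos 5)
Your proposal is correct and is essentially the paper's argument: both transport the visibility relation through the similarity $S_{i_1}^{-1}$, use that the level-$(k-1)$ cylinders are exactly the $S_{i_1}^{-1}$-images of the level-$k$ cylinders beginning with $i_1$ (so the level-$(k-1)$ union sits inside the image of the level-$k$ union), and note that the basepoint $S_{i_1}^{-1}x$ lies in $H_{i_2,\dots,i_k}$. The only difference is presentational — the paper chases a putative intersection point forward under $S_{i_1}$, while you apply $S_{i_1}^{-1}$ to the whole set equation and restrict — which is logically the same step.
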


\begin{proof}
Let $\underline{i} = (i_1, \dots, i_k) \in \Sigma^k$ and suppose that $H_{\underline{i}}$ is $(k,\theta_1)$-visible.  By definition there exists a point $x \in H_{\underline{i}}$ such that
\[
(x+ l_{\theta_1}) \cap \bigcup_{\underline{j} \in \Sigma^k}H_{\underline{j}}=\{ x\}.
\]
Suppose
\[
y \in (S_{i_1}^{-1}x+ l_{\theta_2}) \cap \bigcup_{\underline{j} \in \Sigma^{k-1}} H_{\underline{j}}.
\]
Then
\[
S_{i_1} y \in (x+ l_{\theta_1}) \cap \bigcup_{\underline{j} \in \Sigma^k}H_{\underline{j}},
\]
so $S_{i_1} y =x$ and 
\[
y =S_{i_1}^{-1}x \in S_{i_1}^{-1} H_{\underline{i}}
\]
satisfies
\[
(y+ l_{\theta_2}) \cap \bigcup_{\underline{j} \in \Sigma^{k-1}}H_{\underline{j}}=\{ y\};
\]
 hence $S_{i_1}^{-1} H_{\underline{i}}=H_{i_2,\dots,i_k}$ is $(k-1,\theta_2)$-visible.
\end{proof}

For every line $L$ in the plane and $\epsilon >0$ let $L_{\epsilon} = \{x\in \mathbb{R}^2: {\rm dist}(x,L) \leq \epsilon\}$ be the infinite strip centered on $L$ and of width  $2\epsilon$. The following lemma bounds the number of components   $\{ S_i(H)\}$ that overlap such strips.

\begin{lma}\label{strip}
Let the convex set $H$ contain a disc of radius $a_1$ and be contained in a disc of radius $a_2$ and have diameter $|H|$. For all $\epsilon>0$    
let
\begin{equation}
q(\epsilon) = \frac{\lvert H \rvert(2\epsilon+4 a_2 r_{max})}{\pi (a_1 r_{min})^2}. \label{qepdef}
\end{equation}
Then for all lines $L$, the strip $L_{\epsilon}$ intersects at most  $q(\epsilon)$ of the sets $\{\overline{S_i(H)}:1 \leq i \leq N \}$.
\end{lma}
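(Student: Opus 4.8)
The plan is to prove this by a straightforward area (packing) estimate, relying on three elementary facts about the first-level pieces. First, each $\overline{S_i(H)}$ is a scaled copy of $H$ by the factor $r_i$, so it contains a disc of radius $a_1 r_i \geq a_1 r_{min}$ and is contained in a disc of radius $a_2 r_i \leq a_2 r_{max}$; in particular its diameter is at most $2a_2 r_{max}$. Second, taking closures in (\ref{cosc}) gives $\overline{S_i(H)} \subseteq H$ for every $i$. Third, by the convex open set condition the sets $S_i(\mathrm{int}\, H)$ are pairwise disjoint, and since each inscribed open disc of radius $a_1 r_i$ is an open subset of $H$ it lies in $S_i(\mathrm{int}\, H)$; hence these inscribed discs are pairwise disjoint.

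Next I would show that any piece meeting the strip is confined to a slightly wider strip. Suppose $\overline{S_i(H)}$ meets $L_\epsilon$, say at a point $p$ with $\mathrm{dist}(p,L) \leq \epsilon$. For any $q \in \overline{S_i(H)}$ we have $\lvert p - q \rvert \leq 2 a_2 r_{max}$ by the diameter bound, so $\mathrm{dist}(q,L) \leq \epsilon + 2 a_2 r_{max}$. Writing $\epsilon' = \epsilon + 2 a_2 r_{max}$, this gives $\overline{S_i(H)} \subseteq L_{\epsilon'}$, and therefore the disc of radius $\geq a_1 r_{min}$ inscribed in each such piece lies entirely within $L_{\epsilon'} \cap H$.

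The main obstacle is that $L_{\epsilon'}$ is an \emph{infinite} strip, so one cannot naively bound the number of pieces by dividing the area of the strip by the area of a single inscribed disc. This is resolved by exploiting that all the pieces, hence all the inscribed discs, lie inside the bounded convex set $H$. I would bound the area of $L_{\epsilon'} \cap H$ using convexity: projecting $H$ orthogonally onto the line $L$ yields an interval of length at most $\lvert H\rvert$, so $L_{\epsilon'} \cap H$ is contained in a rectangle with sides $\lvert H \rvert$ and $2\epsilon'$, and thus has area at most $\lvert H\rvert (2\epsilon + 4 a_2 r_{max})$.

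Finally, let $M$ denote the number of pieces $\overline{S_i(H)}$ that intersect $L_\epsilon$. Their $M$ inscribed discs are pairwise disjoint, each of area at least $\pi (a_1 r_{min})^2$, and all are contained in $L_{\epsilon'} \cap H$. Comparing total areas gives $M\, \pi (a_1 r_{min})^2 \leq \lvert H \rvert (2\epsilon + 4 a_2 r_{max})$, so that $M \leq q(\epsilon)$ with $q(\epsilon)$ as defined in (\ref{qepdef}), which is the required bound.
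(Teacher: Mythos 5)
Your proof is correct and follows essentially the same route as the paper's: any piece meeting $L_\epsilon$ lies in the widened strip $L_{\epsilon+2a_2r_{max}}$, and comparing the total area of the pairwise disjoint inscribed discs of radius at least $a_1r_{min}$ with the area of $L_{\epsilon+2a_2r_{max}}\cap H$ (bounded via a rectangle of sides $\lvert H\rvert$ and $2\epsilon+4a_2r_{max}$) gives $q\,\pi(a_1r_{min})^2\leq \lvert H\rvert(2\epsilon+4a_2r_{max})$. If anything you are slightly more careful than the paper, which leaves the disjointness of the inscribed discs (via the convex open set condition) implicit and loosely writes $L_\epsilon\cap H$ where the widened strip is meant.
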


\begin{proof}
Since the $\{S_i\}$ are similarity transformations,  each $\overline{S_i(H)}$ contains a disc of radius $a_1 r_i\geq a_1 r_{min}$ and is contained in a disc of radius $ a_2 r_i \leq a_2 r_{max}$.  If the strip $L_{\epsilon}$ intersects $\overline{S_i(H)}$ then $\overline{S_i(H)} \subseteq L_{\epsilon+2 a_2 r_{max}}$.
Thus if $L_{\epsilon}$ intersects $q$ of the sets $\{\overline{S_i(H)}:1 \leq i \leq N \}$ then, by comparing the areas of $L_{\epsilon}\cap H$ with the areas of the disjoint discs contained in each $\overline{S_i(H)}$, 
\[
q \pi (a_1r_{min})^2 \leq \lvert H \rvert(2\epsilon+4 a_2 r_{max}).
\]
\end{proof}

We now estimate the number of sets $H_{\underline{i}}$ that are $(k,\theta)$-visible from intervals 
$I$ of lengths $|I|$ contained in the linear space $\Pi_\theta$.   For each $k \in \mathbb{N}$, $a>0$ and direction $\theta$, let
\begin{align*}
N_{\theta}(k,a)=\displaystyle\sup_{I\subset \Pi_\theta: \lvert I \rvert = a} \# \Big\{& \underline{i} \in \Sigma^k : \text{there exists } x  \in H_{\underline{i}} \text{ such that } \\
& (x+ l_\theta) \cap \bigcup_{\underline{j} \in \Sigma^k}H_{\underline{j}} = \{x\} \text{ and } \mathrm{proj}_\theta x \in I \Big\}
\end{align*}
and set
\begin{equation}
N(k,a)=\sup_{\theta} N_\theta (k,a) \label{ndef}
\end{equation}

\begin{comment}

The following corollary is immediate from Lemma \ref{strip}

\begin{cor}
For all $0<a \leq 2\epsilon$ we have that $N(1,a) \leq q(\epsilon)$ where $q(\epsilon)$ is given by (\ref{qepdef}). \label{strip2}
\end{cor}

\end{comment}

The following lemma gives an upper bound on the growth of $N(k,a)$ with $k$. Although we will ultimately only apply this result in the case $a=\lvert \mathrm{proj}_\theta H \rvert$, the inductive argument requires that we estimate $N(k,a)$ for all $a>0$.

\begin{lma}  \label{keylem}
For all $\epsilon>0$ there exists a constant $c$ such that
\begin{equation}
N(k,a) \equiv \sup_{\theta} N_{\theta}(k,a)\leq c(1 + N \epsilon^{-1} k a)\lambda(\epsilon)^k \label{key}
\end{equation}
 for all $a>0$ and all $k \in \mathbb{N}$, where 
\begin{equation}
\lambda (\epsilon) = \max \{q(\epsilon),r_{\min}^{-1}\}. \label{lambdadef}
\end{equation}
\end{lma}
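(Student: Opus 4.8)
The plan is to prove (\ref{key}) by induction on $k$, using Lemma \ref{pullback} as the engine that passes from level $k$ to level $k-1$, and Lemma \ref{strip} together with the open set condition to control the sums that arise. Throughout I would work with the affine-in-$a$ form of the bound, $N(k,a)\le c\lambda(\epsilon)^k+cN\epsilon^{-1}k\lambda(\epsilon)^k a$, since linearity in $a$ is what makes the recursion tractable. For the base case $k=1$ I would argue directly: an interval $I\subset\Pi_\theta$ of length $a$ is the projection of the strip of half-width $a/2$ about the line through it in direction $\theta$, so by Lemma \ref{strip} this strip meets at most $q(a/2)$ of the first-level sets, giving $N(1,a)\le q(a/2)$, which is affine in $a$ and is dominated by $c(1+N\epsilon^{-1}a)\lambda(\epsilon)$ once $c$ is large enough.

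For the inductive step I would fix $\theta$ and an interval $I$ of length $a$, and group the $(k,\theta)$-visible sets $H_{\underline i}$ having a visible point $x$ with $\mathrm{proj}_\theta x\in I$ according to their first index $i_1$. By Lemma \ref{pullback}, for each fixed $i_1$ the map $S_{i_1}^{-1}$ carries these to $(k-1,\theta')$-visible sets $H_{i_2\cdots i_k}$, where $\theta'$ is the direction of $S_{i_1}^{-1}l_\theta$, and it carries the visible points into $\mathrm{proj}_{\theta'}$ of an interval of length $r_{i_1}^{-1}\lvert I\cap\mathrm{proj}_\theta S_{i_1}(H)\rvert$. Writing $R$ for the set of indices $i_1$ with $S_{i_1}(H)$ meeting the strip over $I$, this yields
\[
N_\theta(k,a)\le\sum_{i_1\in R}N\big(k-1,\ r_{i_1}^{-1}\lvert I\cap\mathrm{proj}_\theta S_{i_1}(H)\rvert\big),
\]
into which I would substitute the inductive hypothesis.

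Two geometric estimates then drive the argument. First, $\#R\le q(a/2)$ by Lemma \ref{strip}, which is affine in $a$ and contributes the $\lambda(\epsilon)^k$ growth together with part of the $a$-linear term. Second, and more delicately, I need the total expanded length $\sum_{i_1\in R}r_{i_1}^{-1}\lvert I\cap\mathrm{proj}_\theta S_{i_1}(H)\rvert$ to be bounded by a constant multiple of $\lambda(\epsilon)\,a$. Here I would exploit the open set condition: the sets $\{S_i(H)\}$ contain disjoint discs of radii comparable to their sizes, so (again via Lemma \ref{strip}, now with vanishing width) their projections onto $\Pi_\theta$ overlap with multiplicity at most $q(0)$. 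Splitting $R$ according to whether $\mathrm{proj}_\theta S_{i_1}(H)$ is longer or shorter than $a$, and treating the boundedly many size-scales $r_{i_1}\in[r_{\min},r_{\max}]$ separately, should convert this multiplicity bound into the required affine estimate with the correct weight.

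The main obstacle is exactly this second estimate. The pullback expands each relevant sub-interval by the factor $r_{i_1}^{-1}$, and the naive treatment—bounding the number of relevant sets by $q(\epsilon)$ and the per-set length by $r_{\min}^{-1}a$ and then multiplying—produces a spurious per-level factor $q(\epsilon)r_{\min}^{-1}$ together with a term quadratic in $a$, neither of which the target bound can absorb. The crux is to sum the expanded lengths rather than multiply worst-case bounds, so that the estimate stays linear in $a$ and only a single per-level factor $\lambda(\epsilon)=\max\{q(\epsilon),r_{\min}^{-1}\}$ is incurred; the factor $k$ in (\ref{key}) then emerges by accumulating one affine-in-$a$ contribution at each of the $k$ levels. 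I would freely enlarge the $\epsilon$-dependent constant $c$ to absorb one-off geometric constants such as $\lvert H\rvert$, $a_1$ and $a_2$, taking care only that no factor exceeding $\lambda(\epsilon)$ is charged per level, since such a factor would compound to an incorrect exponential rate.
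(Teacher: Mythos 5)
Your inductive skeleton is the same as the paper's: induction on $k$ in the affine-in-$a$ form, Lemma~\ref{pullback} to drop from level $k$ to level $k-1$, Lemma~\ref{strip} to bound the number of contributing first-level sets, and a large $\epsilon$-dependent $c$ to absorb one-off constants. The gap is exactly at the step you call the crux, and as you have set it up it cannot be repaired. Your recursion passes to level $k-1$ the lengths $r_{i_1}^{-1}\lvert I\cap\mathrm{proj}_\theta S_{i_1}(H)\rvert$ of the \emph{whole} first-level projections, and you then need
\[
\sum_{i_1\in R}r_{i_1}^{-1}\lvert I\cap\mathrm{proj}_\theta S_{i_1}(H)\rvert\ \le\ C\,\lambda(\epsilon)\,a
\]
with $C$ bounded as $\epsilon\to0$ (any per-level factor beyond $\lambda(\epsilon)$ that grows in $\epsilon^{-1}$ compounds exponentially and, in the final application with the stopping $T_\delta$ and $\epsilon=\delta\to0$, drives the dimension bound to $2$ rather than $1$). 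This inequality is false. Take $H$ the unit square and the IFS of all $n\times n$ subsquares of side $r=1/n$ (so $F=H$, and the hypotheses of Theorem~\ref{main} hold); let $\theta$ be vertical and $I$ the projection of one column, so $a=r\le\epsilon$. Each of the $n$ squares in that column has $\lvert I\cap\mathrm{proj}_\theta S_{i_1}(H)\rvert=a$, so the left-hand side is at least $n\cdot r^{-1}a=r^{-2}a$, while $\lambda(\epsilon)a=q(\epsilon)a$ is of order $\epsilon\,r^{-2}a$. Hence $C$ must be of order $\epsilon^{-1}$, which is fatal. Neither the multiplicity bound $q(0)$ on overlapping projections nor splitting by size-scales can help: the stacked column is fully consistent with the convex open set condition, so the loss is real, not an artifact of crude bounding.

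The idea you are missing is the one the paper's proof turns on: the lengths fed into the recursion must be the projections of the \emph{visible parts} of the contributing first-level sets, not of the sets themselves. For each contributing $S_{i_j}(H)$, let $a_{i_j}$ be the length of the projection onto $\Pi_\theta$ of its visible points lying over $I$. Two distinct points on a line in direction $\theta$ cannot both be visible (one lies on the half-line $x+l_\theta$ from the other), so the projections of the visible parts of distinct first-level sets overlap only where a visible point lies in two of the convex sets at once, a negligible set under the convex open set condition; hence $\sum_j a_{i_j}=a$ --- multiplicity one, with no $q(0)$ factor. Then $\sum_j a_{i_j}/r_{i_j}\le a/r_{\min}\le\lambda(\epsilon)\,a$, so a single factor $\lambda(\epsilon)$ is charged per level, which is exactly what the target bound can absorb. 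In your column example this is visible concretely: only the top square of the column has a nonempty visible part, so the whole column contributes $a$, not $r^{-1}a$ per square. With this substitution, together with the paper's split of the count term into the regimes $a\le\epsilon$ (count $\le q(\epsilon)\le\lambda(\epsilon)$, by Lemma~\ref{strip}) and $a>\epsilon$ (count $\le N\le N\epsilon^{-1}a$, absorbed into the linear term), the induction closes precisely as you outlined.
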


\begin{proof}
Fix $\epsilon>0$.  We will proceed by induction on $k$.  Note that (\ref{key}) is trivially true when $k=1$ by taking a sufficiently large $c$.  Assume (\ref{key}) for $k-1$ for some $k 
\geq 2$.  We will show that (\ref{key}) holds for $k$ by considering two cases.
\\  \\
Case 1: $a \leq \epsilon$. Fix $\theta$ and let $I \subset \Pi_\theta$ be an interval of length $a\leq \epsilon$. By Lemma \ref{strip} no more than $q(\epsilon)$ first level sets contribute to $N_\theta(1,a)$.  Denote these sets by $S_{i_1}(H), \dots, S_{i_{q(\epsilon)}}(H)$ and suppose that the lengths of the orthogonal projections of the visible part of each of these sets onto $\Pi_\theta$ are $a_{i_1}, \dots, a_{i_{q(\epsilon)}}$  with $a_{i_1}+\dots+a_{i_{q(\epsilon)}}=a$.  Transforming these first level sets back to $H$ and scaling everything accordingly we have, using  Lemma \ref{pullback} and the inductive assumption,
\begin{eqnarray*}
N_\theta (k,a) &\leq& \displaystyle\sum_{j=1}^{q(\epsilon)}\sup_{\theta} N_{\theta}(k-1,a_{i_j}/r_{i_j}) \\
&\leq& \displaystyle\sum_{j=1}^{q(\epsilon)} c\bigg( 1  + N \epsilon^{-1} (k-1) \frac{a_{i_j}}{r_{i_j}}\bigg)\lambda(\epsilon)^{k-1}\\
\begin{comment}
&\leq& \displaystyle\sum_{j=1}^{q(\epsilon)} c\bigg( 1  + N \epsilon^{-1} (k-1) \frac{a_{i_j}}{r_{\min}}\bigg)\lambda(\epsilon)^{k-1}\\
&=& c\Big(q(\epsilon)\lambda(\epsilon)^{k-1}  + N \epsilon^{-1} (k-1) r_{min}^{-1}\lambda(\epsilon)^{k-1}(a_{i_1}+\dots+a_{i_{q(\epsilon)}})\Big)\\ \\
\end{comment}
&\leq& c\Big(q(\epsilon)  
+ N \epsilon^{-1} (k-1)\frac{a}{ r_{min}}\Big)\lambda(\epsilon)^{k-1}\\
&\leq& c\big(1 + N \epsilon^{-1} (k-1) a\big)\lambda(\epsilon)^k \\
&\leq& c(1 + N \epsilon^{-1} k a)\lambda(\epsilon)^k.
\end{eqnarray*}
Taking the supremum over $\theta$ gives (\ref{key}) when $a \leq \epsilon$.
\\ \\
Case 2:
$a > \epsilon$. Fix $\theta$ and let $I \subset \Pi_\theta$ be an interval of length $a > \epsilon$.
 Then no more than $ N$ first level sets contribute to $N_\theta(1,a)$.  Denote these sets by $S_{i_1}(H), \dots, S_{i_s}(H)$ where $s \leq N$ and suppose that the lengths of the orthogonal projections of the visible part of each of these sets onto $\Pi_\theta$ are $a_{i_1}, \dots ,a_{i_s}$ with $a_{i_1}+\dots+a_{i_s}=a$.  Transforming these first level sets back to $H$ and scaling everything accordingly we have, using  Lemma \ref{pullback} and the inductive assumption,
\begin{eqnarray*}
N_\theta (k,a) &\leq& \displaystyle\sum_{j=1}^{s}\sup_{\theta} N_{\theta}(k-1,a_{i_j}/r_{i_j}) \\
&\leq& \displaystyle\sum_{j=1}^{s} c\bigg( 1  + N \epsilon^{-1} (k-1) \frac{a_{i_j}}{r_{i_j}}\bigg)\lambda(\epsilon)^{k-1}\\
&\leq& c\Big(s  
+ N \epsilon^{-1} (k-1)\frac{a}{ r_{min}}\Big)\lambda(\epsilon)^{k-1}\\
&\leq& c\big(N\epsilon^{-1}a + N \epsilon^{-1} (k-1) a\big)\lambda(\epsilon)^k \qquad\text{since $1<\epsilon^{-1}a$} \\ 
&\leq& c(1 + N \epsilon^{-1} k a)\lambda(\epsilon)^k.
\end{eqnarray*}
Taking the supremum over $\theta$ gives (\ref{key}) when $a > \epsilon$.
\end{proof}

Thus the asymptotic growth of $N(k,a)$ is at most of order $\lambda(\epsilon)^k$. This enables us to obtain an upper bound for the dimension of the visible sets.

\begin{lma}  Given $\epsilon>0$, for all $\theta$,
\[
\dim_H F \leq \overline{\dim}_B V_\theta  F \leq \frac{\log \lambda(\epsilon)}{-\log r_{\max}}.
\]
\label{box}
\end{lma}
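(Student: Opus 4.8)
The plan is to establish the two displayed inequalities separately: the right-hand bound is the substantive one and absorbs all the work already done in Lemma~\ref{keylem}, while the left-hand bound is elementary and comes from the projection hypothesis.

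I would first dispose of the lower bound. The key point is that $V_\theta F$ projects onto the whole of $\mathrm{proj}_\theta F$. Indeed, for each $p \in \mathrm{proj}_\theta F$ the line $\{p\} + \mathbb{R}\theta$ meets the compact set $F$, and the point $x$ of $F \cap (\{p\} + \mathbb{R}\theta)$ that is extreme in the direction $\theta$ (that is, maximising $x \cdot \theta$) satisfies $(x + l_\theta) \cap F = \{x\}$, so $x \in V_\theta F$ with $\mathrm{proj}_\theta x = p$. Hence $\mathrm{proj}_\theta V_\theta F = \mathrm{proj}_\theta F$, which is an interval by hypothesis. Since orthogonal projection does not increase Hausdorff dimension, $\overline{\dim}_B V_\theta F \geq \dim_H V_\theta F \geq \dim_H \mathrm{proj}_\theta F = 1$, which is the lower (left-hand) bound.

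For the right-hand inequality I would convert the combinatorial count of Lemma~\ref{keylem} into a box-counting estimate. By Lemma~\ref{vis}, $V_\theta F \subseteq \bigcup_{\underline{i} \in V_\theta^k} H_{\underline{i}}$ for every $k$. Each $(k,\theta)$-visible set lies in $H$, so its witnessing visible point projects into the single interval $I_0 = \mathrm{proj}_\theta H$ of length $a_0 = \lvert \mathrm{proj}_\theta H \rvert$; consequently $\# V_\theta^k \leq N_\theta(k,a_0) \leq N(k,a_0)$. Since each $H_{\underline{i}}$ with $\underline{i} \in \Sigma^k$ has diameter $r_{i_1}\cdots r_{i_k}\lvert H\rvert \leq r_{\max}^k \lvert H\rvert =: \delta_k$, the set $V_\theta F$ is covered by at most $N(k,a_0)$ sets of diameter at most $\delta_k$. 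Writing $M(\delta)$ for the least number of sets of diameter $\delta$ needed to cover $V_\theta F$, this gives $M(\delta_k) \leq N(k,a_0)$, and Lemma~\ref{keylem} with $a = a_0$ yields
\[
M(\delta_k) \leq c\,(1 + N\epsilon^{-1} k a_0)\,\lambda(\epsilon)^k .
\]
Taking logarithms, $\log M(\delta_k) = k\log\lambda(\epsilon) + O(\log k)$ while $-\log\delta_k = k(-\log r_{\max}) - \log\lvert H\rvert$. The scales $\delta_k$ decrease by the fixed factor $r_{\max}$, so the upper box dimension may be computed along this sequence, giving
\[
\overline{\dim}_B V_\theta F = \limsup_{k\to\infty} \frac{\log M(\delta_k)}{-\log\delta_k} \leq \frac{\log\lambda(\epsilon)}{-\log r_{\max}} ,
\]
which is the right-hand inequality.

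The genuinely hard estimate -- the exponential growth rate $\lambda(\epsilon)^k$ of the number of visible pieces -- is already carried by Lemma~\ref{keylem}, so the main obstacle here is organisational rather than analytic. I expect the two points needing care to be: (i) verifying that the \emph{single} interval $\mathrm{proj}_\theta H$ captures every $(k,\theta)$-visible set, so that $\#V_\theta^k \leq N(k,a_0)$ rather than a count spread over a growing family of intervals; and (ii) justifying the passage from the discrete scales $\delta_k$ to the continuous $\limsup$ defining $\overline{\dim}_B$, which is legitimate precisely because consecutive $\delta_k$ differ by the fixed ratio $r_{\max}$ and $M$ is monotone in the scale. The sub-exponential prefactor $1 + N\epsilon^{-1} k a_0$ contributes only the harmless $O(\log k)$ term and so leaves the exponent unchanged.
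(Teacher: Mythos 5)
Your proof of the substantive right-hand inequality is exactly the paper's: cover $V_\theta F$ by the $(k,\theta)$-visible sets via Lemma \ref{vis}, observe that every witnessing point projects into $\mathrm{proj}_\theta H$ so that $\#V_\theta^k \leq N(k,\lvert\mathrm{proj}_\theta H\rvert)$, invoke Lemma \ref{keylem}, and let $k\to\infty$ in the definition of upper box dimension; your extra care in passing from the geometric sequence of scales $\delta_k = \lvert H\rvert r_{\max}^k$ to the full $\limsup$ is a point the paper silently glosses over, and is handled correctly. The difference lies in the left-hand inequality. As printed, the lemma reads $\dim_H F \leq \overline{\dim}_B V_\theta F$, which is surely a typo for $\dim_H V_\theta F \leq \overline{\dim}_B V_\theta F$ (trivial, since Hausdorff dimension never exceeds upper box dimension): the literal statement is actually false whenever $\dim_H F > 1$, since Theorem \ref{main} itself forces $\overline{\dim}_B V_\theta F = 1$ for the sets under consideration, and accordingly the paper's proof makes no attempt to establish it. You instead prove $\overline{\dim}_B V_\theta F \geq \dim_H V_\theta F \geq 1$ by the ``highest point on each line is visible'' argument, giving $\mathrm{proj}_\theta V_\theta F = \mathrm{proj}_\theta F$, a nondegenerate interval (nondegenerate because $F$ is assumed not to lie in a line). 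That argument is correct, and it is precisely the justification needed for the step $1 \leq \dim_H V_\theta F$ in the paper's proof of Theorem \ref{main}, where it is only implicit via the remark in the introduction that $\dim_H \mathrm{proj}_\theta F \leq \dim_H V_\theta F$; but you should be aware that it does not, and cannot, yield the inequality $\dim_H F \leq \overline{\dim}_B V_\theta F$ as literally stated in the lemma.
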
 

\begin{proof}
  By Lemma \ref{vis},  for all $k \in \mathbb{N}$ we can find a cover of $V_\theta F$ by no more than $\#V_\theta^k=N_\theta(k,\lvert\mathrm{proj}_\theta H \rvert) \leq N(k, \lvert\mathrm{proj}_\theta H \rvert)$ sets of diameter at most $\lvert H \rvert r_{\max}^k$.  Using  Lemma \ref{keylem} and the definition of upper box dimension,

\begin{eqnarray*}
\overline{\dim}_B V_\theta F &\leq& \limsup_{k \to \infty} \frac{\log N(k, \lvert\mathrm{proj}_\theta H \rvert)}{-\log \lvert H \rvert r_{\max}^k }\\ \\
&\leq& \limsup_{k \to \infty} \frac{\log \Big(c(1 + N \epsilon^{-1} k a)\lambda(\epsilon)^k \Big)}{-\log  \lvert H \rvert r_{\max}^k } \\ \\
&=&\frac{\log \lambda(\epsilon)}{-\log r_{\max}}.
\end{eqnarray*}
\end{proof}

We complete the proof of  Theorem \ref{main} by applying Lemma \ref{box} to  $F$ regarded as the attractor of an alternative IFS $\{ S_{\underline{i}}: \underline{i} \in T_\delta\}$ where $T_\delta$ is a suitable `stopping'.
\\ \\
\noindent {\em Proof of Theorem \ref{main}}.
For $\delta>0$ define the {\em stopping} $T_\delta$ by
\[
T_\delta= \big\{\underline{i}=i_1 \dots i_k \in \Sigma: r_{i_1} \dots r_{i_k} < \delta \leq  r_{i_1} \dots r_{i_{k-1}} \big\}.
\]
Then the IFS $\{S_{\underline{i}} \}_{\underline{i} \in T_\delta}$ has $F$ as its attractor.  Taking $\epsilon=\delta$, and writing $r_{\underline{i}} = r_{i_1} \dots r_{i_k}$ for ${\underline{i}}= (i_1,\ldots,i_k)$,  Lemma \ref{strip} gives 
\begin{equation}
q (\delta) = \frac{\lvert H \rvert(2\delta+4 a_2\max_{\underline{i} \in T_\delta} r_{\underline{i}})}{\pi (a_1\min_{\underline{i} \in T_\delta} r_{\underline{i}})^2} \leq \frac{\lvert H \rvert(2\delta+4a_2 \delta)}{\pi (a_1\delta r_{min})^2} \equiv \delta^{-1} K, \label{kdef}
\end{equation}
for some constant $K$ independent of $\delta$. 
Applying Lemma \ref{box}, (\ref{lambdadef}) and (\ref{kdef})  to the IFS $\{S_{\underline{i}} \}_{\underline{i} \in T_\delta}$  we obtain
\begin{eqnarray*}
1 \leq \dim_H V_\theta F \leq \overline{\dim}_B V_\theta F &\leq&  \frac{\log \lambda(\delta)}{-\log (\max_{\underline{i} \in T_\delta}r_{\underline{i}})}\\ \\
&\leq& \max  \bigg\{\frac{\log q(\delta)}{-\log (\max_{\underline{i} \in T_\delta}r_{\underline{i}})},\frac{\log  (\min_{\underline{i} \in T_\delta}r_{\underline{i}})^{-1}}{-\log  (\max_{\underline{i} \in T_\delta}r_{\underline{i}})}\bigg\}\\ \\
&\leq& \max  \bigg\{\frac{\log \delta^{-1} K}{-\log \delta},\frac{\log \delta^{-1} r_{min}^{-1}}{-\log \delta}\bigg\}\\ \\
&=& \max  \bigg\{1-\frac{\log  K}{\log \delta}, ~~1+\frac{\log r_{min}}{ \log \delta } \bigg\}.
\end{eqnarray*}
This is true for all $\delta > 0$, so
\[
\dim_H V_\theta F=\dim_B V_\theta= 1.
\]
\begin{flushright}$\Box$\end{flushright}

The proof of Theorem \ref{finite} is very similar to the proof of Theorem \ref{main}.  The difference is found in the definition of $N(k,a)$ as instead of taking the supremum over all directions we need only consider the set of directions $\{g(\theta):   g \in G\}$.  Hence, we replace (\ref{ndef}) by
$$N(k,a)=\max_{g \in G} N_{g(\theta)}(k,a) \label{new}$$
for each $k \in \mathbb{N}$ and $a>0$, and the proof proceeds in the same way.

\section{Examples}

A large range of self-similar sets satisfy the convex open set condition and project onto intervals in all directions and so satisfy the conditions of Theorem \ref{main}. 

Let $H$ be a convex set and $\{S_i \}_{i=1}^{N}$ be similarities such that $H$ is the convex hull of $\bigcup_{i=1}^{N} S_i(H)$ and such that every straight line that intersects $H$ intersects at least one of the 
$S_i(H)$ (such sets need not be connected). Then the hypotheses and conclusion of Theorem \ref{main} hold. In particular if $F$ is a self-similar carpet, so that $H$ is a unit square divided into $n \times n$ subsquares of side $1/n$ where $n \geq 2$ and the $S_i(H)$ are a subcollection of these subsquares such that every line that intersects $H$ intersects at least one of the $S_i(H)$ (the $S_i(H)$ will include the four corner squares), then we can conclude that the visible sets from all directions have dimension 1.

\begin{figure}
	\centering
	\includegraphics[width=50mm]{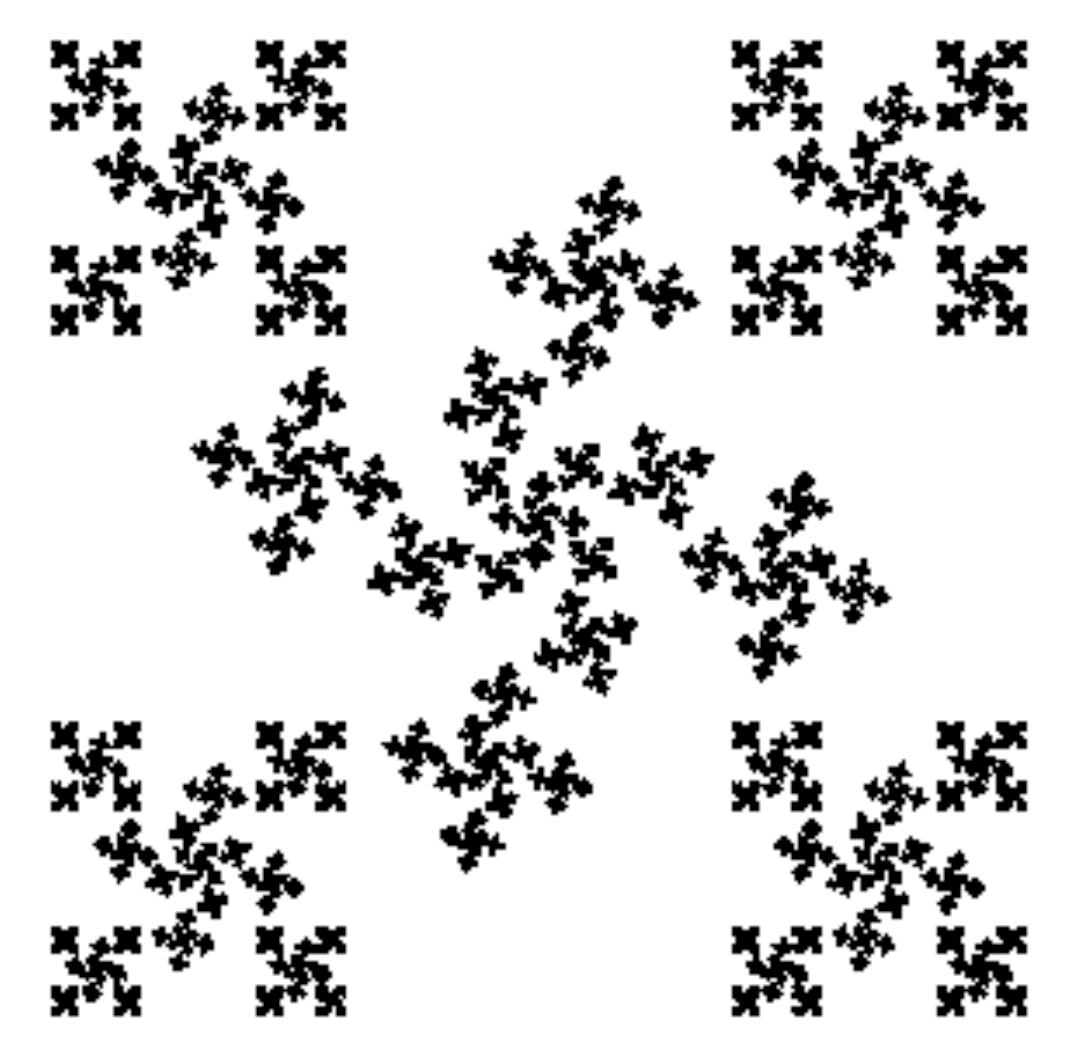}
	\qquad
	\includegraphics[width=48mm]{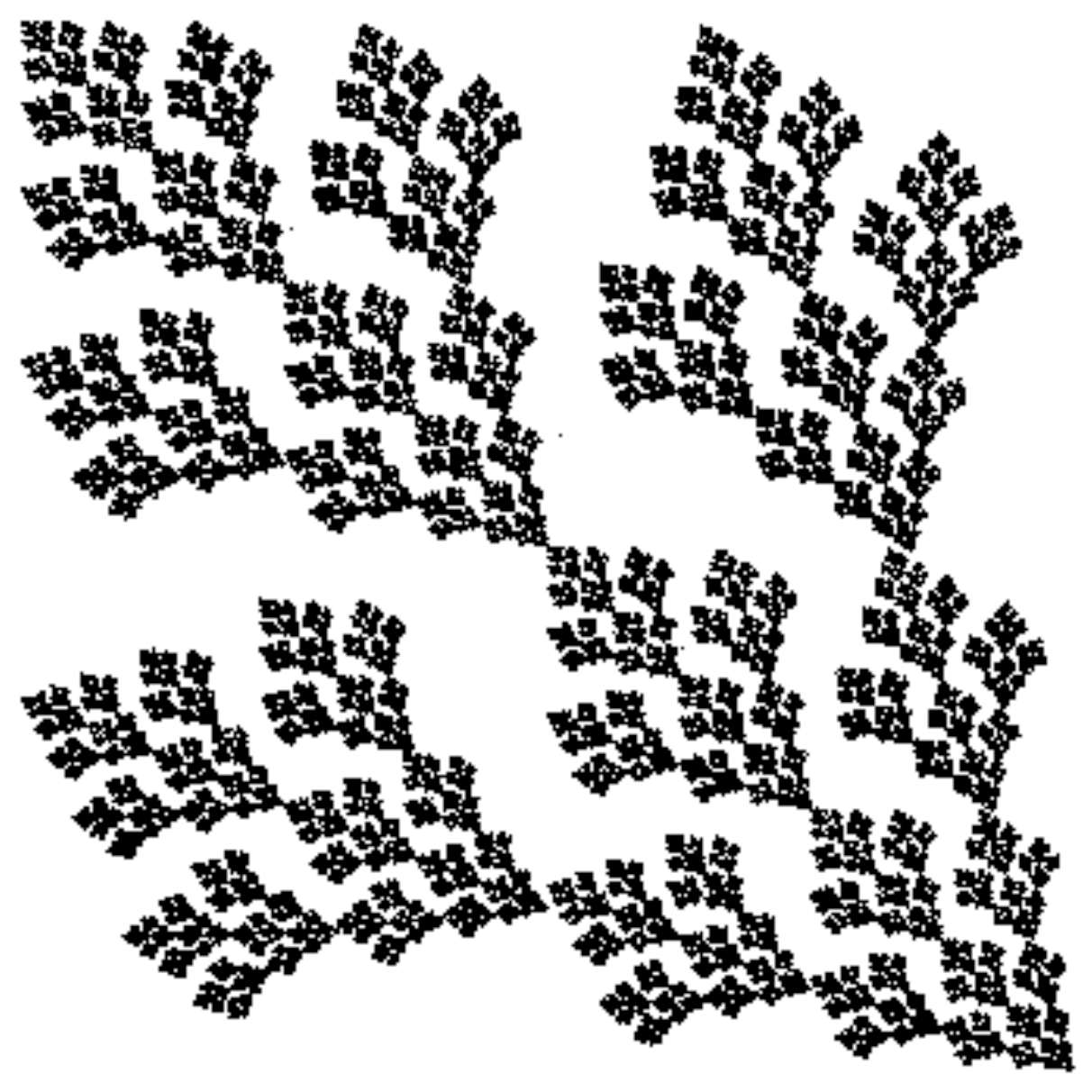}
	\caption{Two self-similar sets with visible sets from all directions of dimension 1}
	\label{figureA.pdf}
\end{figure}

Many self-similar fractals constructed by the `initiator-generator' procedure (see \cite{falconer,mand}) satisfy the conditions of Theorem \ref{main} (this procedure generalises the usual von Koch curve construction by repeated substitution of an open  polygon in itself). The generator is an open polygon $K$ consisting of the union of a finite number of line segments. A self-similar curve is constructed by repeatedly replacing line segments by similar copies of $K$ scaled so that the endpoints of $K$ are mapped to the ends of each of the line segments.  Thus the generator codes a family of similarity transformations $\{S_i \}$ that map the ends of $K$ onto its component line segments. Theorem \ref{main} applies provided that the convex hull of $K$ is mapped into itself by these similarities with the images of the interiors disjoint.
Particular instances include the generalised von Koch curves, where the generator consists of four equal  line segments with angles $-(\frac{\pi}{2} + \alpha), 2\alpha, \frac{\pi}{2} + \alpha$ between consecutive segments, for some $0<\alpha < \frac{\pi}{2}$ ($\alpha = \frac{\pi}{6} $ gives the usual von Koch curve). 
Topologically more complicated examples can easily be obtained using generators other than curves.

\end{document}